\documentclass [11pt] {article}
\usepackage{amsmath}
\usepackage{amsfonts}
\usepackage{amssymb}
\usepackage{amsfonts}
\usepackage{amssymb}
\usepackage{times}
\usepackage{color}
\usepackage{graphicx}

\setlength{\topmargin}{-.5in}
\setlength{\textheight}{9in}
\setlength{\oddsidemargin}{5pt}
\setlength{\textwidth}{6.25in}

\newcommand{\Hom}{\mathrm{Hom}}

\newtheorem{lemma}{Lemma}[section]
\newtheorem{corollary}{Corollary}[section]
\newtheorem{theorem}{Theorem}

\newtheorem{remark}{Remark}[section]

\newtheorem{proposition}{Proposition}[section]
\newtheorem{definition}{Definition}[section]

\def\deg{\mbox{deg}\,}

\def\<{\langle}
\def\>{\rangle}

\def\deg{\mbox{deg}\,}

\begin{document}

\title{An information theoretic approach to Sidorenko's conjecture}
\author{{Bal\'azs Szegedy}}

\maketitle

\abstract{We investigate the famous conjecture by Erd\H os-Simonovits and Sidorenko using information theory. 
Our method gives a unified treatment for all known cases of the conjecture and it implies various new results as well. Our topological type conditions allow us to extend Sidorenko's conjecture to large families of $k$-uniform hypergraphs. This is somewhat unexpected since the conjecture fails for $k$ uniform hypergraphs in general.
\bigskip

\section{introduction}

In 1993 \cite{Sid} Sidorenko rased the question if for every bipartite graph $H=(\{1,2,\dots,n\},E)$ and bounded symmetric non-negative function $h$ on $[0,1]^2$ the following correlation inequality holds
\begin{equation}\label{conj}
\int \prod_{(i,j)\in E}h(x_i,x_j)~d\mu^n\geq \Bigl(\int h~d\mu^2\Bigr)^{|E|}.
\end{equation}
The integrals on the left hand side of (\ref{conj}) arise as Mayer integrals in statistical mechanics, Feynman integrals in quantum field theory, and multicenter integrals in quantum chemistry. Furthermore they arise as homomorphism densities in the so-called graph limit theory.

Another, more combinatorial formulation of the conjecture, that turns out to be equivalent with yet another form stated independently by Erd\H os and Simonovits, is the following. For two finite graphs $H$ and $G$, a function $f:V(H)\rightarrow V(G)$ is called a homomorphism if it maps edges to edges. let $t(H,G)$ denote the probability that a random map from $V(H)$ to $V(G)$ is a homomorphism. 
Then, for the graph $H$, (\ref{conj}) is known to be equivalent with the statement that 

\begin{equation}\label{conj2}
t(H,G)\geq t(e,G)^{|E(H)|} 
\end{equation}

holds for every graph $G$ where $e$ is a single edge.

The conjecture is proven for numerous special families of bipartite graphs (Blakley, Roy \cite{BR}, Sidorenko \cite{Sid}, Benjamini, Peres \cite{BP}, Hatami \cite{Hat}, Conlon, Sudakov, Fox \cite{CFS}, Lov\'asz \cite{L}, Li, Szegedy \cite{LSz}, Kim, Lee and Lee \cite{KLL}). These results were obtained by using a variety of methods from combinatorics, probability theory, graph limit theory and even linear algebra. In this article we provide a new information theoretic approach which yields the conjecture for a class of graphs that contains all previous classes and many new graphs as well. Our class of graphs is defined as line graphs of certain higher dimensional complexes. Even if Sidorenko's conjecture fails in general (as it is the case for $k$-uniform hypergraphs if $k>2$) it is a natural objective to characterize all graph $H$ that satisfy it. Our results hint at a topological phenomenon that underlies the complete classification of graphs and hypergraphs satisfying the conjecture. 

  It is an interesting fact that the first class of graphs satisfying the conjecture was discovered by Blakley and Roy already in 1965 when they proved it for paths. Their statement was formulated in a linear algebraic language. A large class of graphs satisfying the conjecture was discovered by Conlon, Fox and Sudakov. They proved in \cite{CFS} that bipartite graphs in which one point is complete to the other side satisfy the conjecture. It was discovered by Li and the author in \cite{LSz} that the result by Conlon, Fox and Sudakov has a short analytic proof based on Jensen's inequality applied for the functions $\log x$ and $x \log x$. This motivates us to use information theory as a general approach to the conjecture.

To explain our main results we define a family $\mathfrak{S}$ of bipartite graphs that may be of independent interest. A graph $H$ is in $\mathfrak{S}$ if there is a scheme for producing a probability distribution on the copies of $H$ in an arbitrary graph $G$ using a sequence of conditionally independent couplings starting from random edges in $G$. More precisely if $H$ is the single edge $e$ then the only allowed scheme is the uniform distribution on $\Hom(e,G)$. Assume that we have such schemes for $H_1$ and $H_2$ i.e. probability distributions $\mu_1(G)$ on $\Hom(H_1,G)\subset V(G)^{V(H_1)}$ and $\mu_2(G)$ on $\Hom(H_2,G)\subset V(G)^{V(H_2)}$ for every $G$. Assume furthermore that for every graph $G$ the marginal distribution of $\mu_1(G)$ on some set $S_1\subset V(H_1)$ is the same as the marginal distribution of $\mu_2(G)$ on $S_2\subset V(H_2)$ using some bijection between $S_1$ and $S_2$.
Then we can take the conditional independent coupling of $\mu_1(G)$ and $\mu_2(G)$ over this joint marginal to obtain a new probability scheme $G\mapsto\mu_3(G)$. The new scheme is defined on $\Hom(H_3,G)$ where $H_3$ is obtained by taking the disjoint union of $H_1$ and $H_2$ and then identifying $S_1$ and $S_2$ using the bijection.
The class $\mathfrak{S}$ consists of those graphs $H$ that admit such a scheme.  Random walks and branching random walks on $G$ are special cases of this framework.
As a demonstration of our method we will prove the next theorem.

\medskip

{\bf Theorem}~~{\it If there is a probability scheme for $H$ built up in a way that all gluing operations use subsets that span forests then $H$ satisfies Sidorenko's conjecture.}

\medskip

A more precise formulation can be found in theorem \ref{forestgluing}. This theorem itself includes bipartite graphs in which one point is complete to the other side, tree-arrangeable graphs, bipartite graphs in which one side has size at most $4$ vertices, hypercubes up to dimension $5$ and many more graphs. To go further we need to develop a background theory for working with iterated conditionally independent couplings.  For this purpose we introduce reflection complexes. Refelection complexes are combintorial structures with a topological flavor. They encode the construction of probability distributions of their frames in an arbitrary graph $G$. We introduce the notion of thick graphs (definition \ref{defthick}) as line graphs (one dimensional frames) of reflection complexes satisfying a linear algebraic condition. Our main theorem for graphs is the following.

\medskip

\noindent{\bf Theorem}~~{\it Thick graphs satisfy Sidorenko's conjecture}

\medskip

See also theorem \ref{thicksid}. Thick graphs generalize the idea of theorem \ref{forestgluing} and they contain all known exaples for Sidorenko's conjecture. In particular thick graphs are closed with respect to a certain subdivision operation in which we replace the edges of a thick graph by another thick graph using spanned forests as vertices (see theorem \ref{subdiv}). A special case of this operation is the $\square$-product with a tree studied in \cite{KLL}. We show that if $H$ is a thick graph and $T$ is a tree then $H\square T$ is also thick. This result shows that the examples constructed in \cite{KLL} for Sidorenko's conjecture (including high dimensional grids and hypercubes \cite{Hat}) are thick graphs.    

Below we briefly explain how information theory enters our argument. Let ${\rm Hom}(H,G)$ denote the set of homomorphisms from $H$ to $G$. We have  that the quantity $d(H,G):=-\ln t(H,G)$ is equal to the relative entropy (also called Kullback-Leibler divergence and not to confuse with conditional entropy) of the uniform measure on ${\rm Hom}(H,G)$ with respect to the uniform measure $\nu$ on all functions $f:V(H)\rightarrow V(G)$. Thus (\ref{conj2}) is equivalent with the entropy inequality $d(H,G)\leq |E(H)|d(e,G)$. It is another fact that the uniform distribution has the smallest relative entropy with respect to $\nu$ among all probability distributions on ${\rm Hom}(H,G)$. It follows that if we manage to find another probability measure $\mu$ (witness measure) on ${\rm hom}(H,G)$ for every $G$ whose relative entropy is not greater than $|E(H)|d(e,G)$ then the Sidorenko conjecture is proved for $H$. This is the motivation to construct probability measures on ${\rm Hom}(H,G)$ that are easier to analyze than the uniform measure. We will build up such measures by iterating conditionally independent couplings. An advantage of this is that relative entropy satisfies an inclusion-exclusion type formula for conditionally independent couplings and thus it gives a method to understand the relative entropy of the measure that we build up this way. 

Or methods can be generalized to hypergraphs. We show that, despite of the fact that Sidorenko's conjecture fails for $k$-uniform hypergraphs (see \cite{Sid}) if $k>2$, there are large families of $k$-uniform hypergraphs satisfying the conjecture. In particular we prove a hypergraph analogue of the famous Bakley-Roy inequality (see \cite{BR}).

Finally we mention that all our results work in the non-symmetric (multipartite) setting. Our statements and proofs require only minor modifications to achieve this.

\section{Relative entropy and conditionally independent couplings}

In this chapter we review some basic facts about relative entropy and couplings. Let $\mu$ and $\nu$ be two probability measures on the same $\sigma$-algebra such that $\mu$ is absolutely continuous with respect to $\nu$. The {\bf relative entropy} function $D(\mu\parallel\nu)$ is equal to $\mathbb{E}_\mu(\log(d\mu/d\nu))$. If $X$ is a finite set with probability measures $\mu$ and $\nu$ then 
$$D(\mu\parallel\nu)=\sum_{x\in X}(\log\mu(x)-\log\nu(x))\mu(x)$$
where the summand is defined to be $0$ whenever $\mu(x)$ is zero. Note that the absolute continuity of $\mu$ means that $\nu(x)=0$ implies $\mu(x)=0$. Assume that $\mu$ is concentrated on some subset $Y\subseteq X$. Then by Jensen's inequality applied for the function $z\mapsto z\log z$ one obtains that
\begin{equation}\label{inex} 
D(\mu\parallel\nu)\geq-\log(\nu(Y))
\end{equation}
with equality if and only if $\mu(y)=\nu(y)/\nu(Y)$ holds for every $y\in Y$. In particular 
\begin{equation}\label{pos}
D(\mu\parallel\nu)\geq 0
\end{equation}
 holds for every $\mu$ and $\nu$.

Let $\{(X_i,\mu_i)\}_{i=1}^3$ be three finite probability spaces. Assume that $\{\psi_i:X_i\rightarrow X_3\}_{i=1,2}$ are measure preserving maps. Then we say that $X_3$ is a {\bf joint factor} of $(X_1,\mu_1)$ and $(X_2,\mu_2)$. Note that the measure on $X_3$ is uniquely determined by $\psi_1$ (or $\psi_2$) since $\mu_3(A)=\mu_1(\psi_1^{-1}(A))$ holds for $A\subseteq X_3$. Let $X_4$ denote the set of elements $(x_1,x_2)$ in $X_1\times X_2$ satisfying $\psi_1(x_1)=\psi_2(x_2)$ and that $\mu_3(\psi_1(x_1))\neq 0$. A measure $\mu$ on $X_4$ is called a {\bf coupling} of $(X_1,\mu_1)$ and $(X_2,\mu_2)$ over the joint factor $X_3$ if the projections $\pi_1:X_4\rightarrow X_1$ and $\pi_2:X_4\rightarrow X_2$ are measure preserving on $(X_4,\mu)$.

Let $\mu_4$ be the measure on $X_4$ defined by 

\begin{equation}\label{inex0}
\mu_4((x_1,x_2))=\frac{\mu_1(x_1)\mu_2(x_2)}{\mu_3(\psi_1(x_1))}.
\end{equation}

 It is clear that the projections $\{\pi_i:X_4\rightarrow X_i\}_{i=1,2}$ are measure preserving.
We say that $X_4$ together with the maps $\pi_1$ and $\pi_2$ is the {\bf conditionally independent coupling} of $X_1$ and $X_2$ over the joint factor $X_3$.

Keeping the above notation, assume that there are other measures $\nu_i$ on the sets $X_i$ for $1\leq i\leq 4$ such that $(X_4,\nu_4)$ is the conditionally independent coupling of $(X_1,\nu_1)$ and $(X_2,\nu_2)$ over the joint factor $(X_3,\nu_3)$ with the same maps $\psi_1$ and $\psi_2$. 
Using (\ref{inex0}) we get the next inclusion-exclusion type formula.
\begin{equation}\label{inex1}
D(\mu_4\parallel\nu_4)= D(\mu_1\parallel\nu_1)+D(\mu_2\parallel\nu_2)-D(\mu_3\parallel\nu_3).
\end{equation}

The next lemma says that, among couplings, the conditionally independent coupling minimizes the relative entropy with respect to a conditionally independent coupling.

\begin{lemma}\label{nagyobb} Keeping the above notation and assumptions let $\mu$ be a coupling of $(X_1,\mu_1)$ and $(X_2,\mu_2)$ over the joint factor $X_3$. Then
$$D(\mu\parallel\nu_4)\geq D(\mu_4\parallel\nu_4).$$
\end{lemma}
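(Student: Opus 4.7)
The plan is to establish a Pythagorean-type identity for conditionally independent couplings, namely
\[
D(\mu\parallel\nu_4)=D(\mu\parallel\mu_4)+D(\mu_4\parallel\nu_4),
\]
after which the desired inequality follows immediately from the positivity bound (\ref{pos}) applied to $D(\mu\parallel\mu_4)\ge 0$.

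To prove this identity I would first take $\log$ in the explicit coupling formula (\ref{inex0}) to obtain
\[
\log\nu_4(x_1,x_2)=\log\nu_1(x_1)+\log\nu_2(x_2)-\log\nu_3(\psi_1(x_1))
\]
on the support of $\nu_4$, and the analogous identity for $\log\mu_4$. The essential observation is that, since $\mu$ is a coupling of $\mu_1$ and $\mu_2$ over the joint factor $X_3$, its marginals on $X_1$, $X_2$ and (via $\psi_1\circ\pi_1=\psi_2\circ\pi_2$) on $X_3$ coincide with those of $\mu_4$. Integrating the three-term decomposition of $\log\nu_4$ against $\mu$ therefore yields the same number as integrating it against $\mu_4$, giving $\sum\mu\log\nu_4=\sum\mu_4\log\nu_4$. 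The same marginal-matching argument applied to $\log\mu_4$ yields $\sum\mu\log\mu_4=\sum\mu_4\log\mu_4$.

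Expanding both sides of the proposed identity in terms of $\sum\mu\log\mu$, $\sum\mu\log\nu_4$, $\sum\mu\log\mu_4$, $\sum\mu_4\log\mu_4$, $\sum\mu_4\log\nu_4$ and substituting the two marginal identities, the $\mu_4$-terms cancel and the remaining terms collapse to $\sum\mu\log\mu-\sum\mu\log\nu_4=D(\mu\parallel\nu_4)$, as required. The only mild subtlety is absolute continuity: $D(\mu\parallel\mu_4)$ must be well defined, but the support of $\mu_4$ consists of all pairs $(x_1,x_2)\in X_4$ with $\mu_1(x_1),\mu_2(x_2)>0$, which contains the support of any coupling of $\mu_1$ and $\mu_2$, so $\mu\ll\mu_4$ automatically. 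I do not expect a genuine obstacle here—the statement is essentially the information-projection property of conditionally independent couplings and a close cousin of (\ref{inex1}) with $\mu$ replacing a fully conditionally independent measure on the left.
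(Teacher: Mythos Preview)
Your proof is correct and runs on the same engine as the paper's: the decomposition of $\log\nu_4$ via (\ref{inex0}) together with the fact that $\mu$ and $\mu_4$ share all three marginals, which forces $\sum_x \mu(x)\log\nu_4(x)=\sum_x \mu_4(x)\log\nu_4(x)$. Where the two arguments diverge is only in the final packaging. The paper stops at
\[
D(\mu\parallel\nu_4)-D(\mu_4\parallel\nu_4)=H(\mu_4)-H(\mu)=H(\mu_1)+H(\mu_2)-H(\mu_3)-H(\mu)
\]
and then appeals to Shannon's submodularity inequality. You instead apply the same marginal-matching trick a second time, to $\log\mu_4$, obtaining the Pythagorean identity $D(\mu\parallel\nu_4)=D(\mu\parallel\mu_4)+D(\mu_4\parallel\nu_4)$, and conclude via (\ref{pos}). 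The two endings are equivalent, since the marginal identity for $\log\mu_4$ gives $D(\mu\parallel\mu_4)=H(\mu_4)-H(\mu)$, so the nonnegativity you invoke \emph{is} the Shannon inequality the paper invokes. Your route has the mild advantage of staying entirely within facts already established in the paper (namely (\ref{pos})) rather than citing Shannon's inequality externally, and it makes the information-projection interpretation explicit; the paper's route makes the connection to entropy submodularity more visible.
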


\begin{proof} First we argue that $D(\mu\parallel\nu_4)-D(\mu_4\parallel\nu_4)=H(\mu_4)-H(\mu)$ where $H$ denotes the usual entropy. To see this it is enough to show that $\sum_{x\in X_4}\log(\nu_4(x))(\mu(x)-\mu_4(x))=0$. Decomposing the sum into three sums according to $\log(\nu_4(x))=\log(\nu_1(\pi_1(x))+\log(\nu_2(\pi_2(x))-\log(\nu_3(\psi_1(\pi_1(x))))$ and using that $\mu$ is a coupling of $(X_1,\mu_1)$ and $(X_2,\mu_2)$ over $(X_3,\mu_3)$ (and so the marginals of $\mu$ and $\mu_4$ coincide) we obtain the desired equation. From $H(\mu_4)=H(\mu_1)+H(\mu_2)-H(\mu_3)$ we get that 
$$D(\mu\parallel\nu_4)-D(\mu_4\parallel\nu_4)=H(\mu_1)+H(\mu_2)-H(\mu)-H(\mu_3)$$ and the right hand side is positive by Shannon's inequality for entropy.
\end{proof}

\medskip

\section{Probability distributions of graph homomorphisms}

Recall that $\Hom(H,G)\subset V(G)^{V(H)}$ denotes the set of homomorphisms from $H$ to $G$ and $t(H,G)$ denotes the probability that a random map $f:V(H)\rightarrow V(G)$ is a homomorphism. We interpret $\Hom(H,G)$ as the set of copies of $H$ in $G$ and $t(H,G)$ as the density of $H$ in $G$.
Let $\tau(H,G)$ denote the uniform distribution on $\Hom(H,G)$ and let $\nu(H,G)$ denote the uniform distribution on $V(G)^{V(H)}$. Let us use the convention that $D(\mu):=D(\mu\parallel\nu(H,G))$ for an arbitrary probability distribution $\mu$ on $V(G)^{V(H)}$. It is clear that $D(\tau(H,G))=-\log(t(H,G))$ holds for every $H$ and $G$. This creates the connection between subgraph densities and relative entropy. 

We will use use the following notation. In a graph $G$ let $\kappa$ denote the probability distribution on the vertices in which the probability of a vertex is proportional to its degree. The role of $\kappa$ for us is that it is the distribution of an end point of a uniformly chosen random edge. Let us use the short hand notation $D_v:=D(\kappa)$ and $D_e=D(\tau(e,G))$. In this paper the edge set of a target graph $G$ is always assumed to be not empty. This guarantees that the distributions $\tau(e,G)$ and $\kappa$ exist. 
As we pointed out in the introduction, Sidorenko's conjecture for $H$ is equivalent with the statement that

\begin{equation}\label{entsid}
D(\tau(H,G))\leq |E(H)|~D_e
\end{equation}
holds for all graphs $G$.
Note that any probability distribution $\mu$ on $\Hom(H,G)$ satisfies that
\begin{equation}\label{jen}
D(\tau(H,G))\leq D(\mu).
\end{equation}
If $\mu$ satisfies $D(\mu)\leq |E(H)|D_e$ then we will say that $\mu$ is a {\bf witness} measure. It follows from (\ref{jen}) that if $\mu$ is a witness measure then $H$ satisfies the Sidorenko conjecture in $G$.
 
In this chapter we will study probability distributions on homomorphism sets that are iteratively obtained from the uniform distribution on edges using conditionally independent couplings. We will use factors of very specific form. Assume that $\mu$ is a probability distribution on $\Hom(H,G)$ and let $\beta:S\rightarrow V(H)$ be an injective map (labeling) for some set $S$.
Then the map $\phi\rightarrow\phi\circ\beta$ (where $\phi\in\Hom(H,G)$) on $\Hom(H,G)$ defines a factor of $(\Hom(H,G),\mu)$. We denote this factor by $(V(G)^S,\mu|_\beta)$ and call it a {\bf vertex factor} of $\mu$. If $S\subseteq V(H)$ is a subset of $V(H)$ then we denote by $\mu|_S$ the probability measure $\mu|_\beta$ where $\beta:S\rightarrow S$ is the identity map. If $S$ is empty then $\mu|_S$ is defined on a single point $V(G)^0$ and $D(\mu|_S)=0$.

Assume that we have two probability spaces $(\Hom(H_1,G),\mu_1)$ and $(\Hom(H_2,G),\mu_2)$ and two injective maps $\{\beta_i:[n]\rightarrow V(H_i)\}_{i=1,2}$ such that $\mu_3:=\mu_1|_{\beta_1}=\mu_2|_{\beta_2}$. Then we denote by $C(\mu_1,\mu_2,\beta_1,\beta_2)$ the conditionally independent coupling of $\mu_1$ and $\mu_2$ over $\mu_3$.

\medskip

A {\bf probability scheme} of a graph $H$ is a function $f$ on the set of finite graphs whose value $f(G)$ is a probability distribution on $\Hom(H,G)$. We say that $H$ is the {\bf frame} of the probability scheme $f$. Let $f_i$ be a probability scheme for $H_i$ where $i=1,2$. Assume that $\{\beta_i:[n]\rightarrow V(H_i)\}_{i=1,2}$ are two labelings such that $f_1(G)|_{\beta_1}=f_2(G)|_{\beta_2}$ holds for every $G$. Then we say that $\beta_1$ and $\beta_2$ define a joint vertex factor of $f_1$ and $f_2$. The conditionally independent coupling $g=C(f_1,f_2,\beta_1,\beta_2)$ of $f_1$ and $f_2$ is the function $g$ whose value on $G$ is $C(f_1(G),f_2(G),\beta_1,\beta_2)$. The frame of $g$ is the graph obtained by identifying the vertices with the same label in the disjoint union of $H_1$ and $H_2$. After identification we delete multiple edges.

\begin{definition} Let $\mathfrak{A}$ denote the smallest set of probability schemes which contains the scheme $G\rightarrow \tau(e,G)$ (uniform random edge) and is closed with respect to conditionally independent couplings over joint vertex factors. We call the elements in $\mathfrak{A}$ {\bf coupling structures}. Let $\mathfrak{S}$ denote the set of frames of all coupling structures. We call the elements of $\mathfrak{S}$ {\bf coupling frames}.
\end{definition} 

Notice that the fact that $f(G)$ is a probability distribution on $\Hom(H,G)$ implies that $\Hom(H,G)$ is not empty for every graph $G$. This shows that every graph in $\mathfrak{S}$ has to be bipartite. It follows from the definition that if a probability distribution on $\Hom(H,G)$ is constructed according to a probability scheme in $\mathfrak{A}$ then its marginals on the edges of $H$ are all identical to $\tau(e,G)$ and its marginals on the vertices are identical to $\kappa$.

Let $\mathfrak{A}_1\subset\mathfrak{A}$ be the subset in which only couplings over independent vertex sets are used. Correspondingly $\mathfrak{S}_1\subset\mathfrak{S}$ is the set of frames of the elements of $\mathfrak{A}_1$.
As an easy demonstration of our method we can immediately prove the following.

\medskip

\begin{proposition}\label{emptygluing} Every element in $\mathfrak{A}_1$ is a family of witness measures. Consequently every graph in $\mathfrak{S}_1$ satisfies the Sidorenko conjecture.
\end{proposition}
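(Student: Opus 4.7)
The plan is to show by structural induction on the construction of elements of $\mathfrak{A}_1$ that every scheme $f \in \mathfrak{A}_1$ with frame $H$ satisfies $D(f(G)) \leq |E(H)|\,D_e$ for every finite graph $G$; the Sidorenko conclusion then follows from (\ref{jen}) combined with the reformulation (\ref{entsid}). The base case $f(G) = \tau(e,G)$ is immediate, with equality.

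For the inductive step, assume $f_1, f_2 \in \mathfrak{A}_1$ are witness schemes with frames $H_1, H_2$, and let $\beta_i : [n] \to V(H_i)$ be labelings whose images $S_i$ are independent in $H_i$ and define a joint vertex factor. Let $f_4 = C(f_1, f_2, \beta_1, \beta_2)$ with frame $H_4$, fix $G$, and write $\mu_i = f_i(G)$ and $\mu_3 = \mu_1|_{\beta_1} = \mu_2|_{\beta_2}$. First I would verify that the uniform reference measures $\nu(H_1, G)$ and $\nu(H_2, G)$ themselves form a conditionally independent coupling over the uniform measure on $V(G)^{[n]}$, with composite $\nu(H_4, G)$; substituting into (\ref{inex0}) reduces this to the vertex identity $|V(H_4)| = |V(H_1)| + |V(H_2)| - n$, which holds because $H_4$ is built by identifying the $n$ labeled vertices. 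Applying (\ref{inex1}) then yields
$$D(\mu_4) = D(\mu_1) + D(\mu_2) - D(\mu_3),$$
and discarding the nonnegative term $D(\mu_3)$ via (\ref{pos}), together with the inductive hypotheses, gives $D(\mu_4) \leq (|E(H_1)| + |E(H_2)|)\,D_e$.

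To close the induction I still need the combinatorial identity $|E(H_4)| = |E(H_1)| + |E(H_2)|$, which is where the independence of $S_1, S_2$ is essential: since each edge of $H_i$ has at most one endpoint in $S_i$, no edge coming from $H_1$ can be identified with any edge coming from $H_2$ in $H_4$, so the deletion of multiple edges from the disjoint union removes nothing. The one delicate step in the argument is the observation that the uniform reference measures themselves fit into the conditionally independent coupling framework, elevating (\ref{inex1}) from an inequality to the exact identity needed here; after that the rest is bookkeeping.
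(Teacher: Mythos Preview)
Your proof is correct and follows essentially the same approach as the paper: structural induction using the inclusion--exclusion identity (\ref{inex1}) together with (\ref{pos}) to drop $D(\mu_3)$, and then the edge count $|E(H_4)|=|E(H_1)|+|E(H_2)|$ coming from the independence of the glued sets. You simply spell out two details the paper leaves implicit, namely that the uniform reference measures $\nu(H_i,G)$ themselves form a conditionally independent coupling (so that (\ref{inex1}) applies with the convention $D(\mu)=D(\mu\parallel\nu(H,G))$) and why no edges are identified under the gluing.
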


\begin{proof} It is trivial that $\tau(e,G)$ is a witness measure. Assume that $f_1$ and $f_2$ are probability schemes with  frames $H_1$ and $H_2$. Assume that $\{\beta_i:[n]\rightarrow V(H_i)\}_{i=1,2}$ defines a joint vertex factor such that the images of $\beta_1$ and $\beta_2$ are independent sets. Let $H$ be the frame of $g=C(f_1,f_2,\beta_1,\beta_2)$. Then from (\ref{pos}) and (\ref{inex1}) it follows that $$D(g)\leq D(f_1)+D(f_2)\leq (|E(H_1)|+|E(H_2)|)D_e=|E(H)|D_e.$$
\end{proof}

Proposition \ref{emptygluing} provides a very short unified proof for many results in the topic. In particular it implies that the so-called tree-arrangeable graphs introduced in \cite{KLL} satisfy Sidorenko's conjecture. Trees, reflection trees, even cycles and bipartite graphs in which one point is complete to the other side are all tree-arrangeable and thus we cover many results from the papers \cite{LS},\cite{Sid},\cite{BR},\cite{CFS},\cite{KLL}. 
Now we give a further strengthening of proposition \ref{emptygluing}.
Let $\mathfrak{A}_2\subset\mathfrak{A}$ be the set in which all couplings use vertex sets that span forests. Correspondingly $\mathfrak{S}_2\subset\mathfrak{S}$ is the set of frames of the elements of $\mathfrak{A}_2$.
We have that $\mathfrak{S}_1\subset\mathfrak{S}_2\subset\mathfrak{S}$.
We obtain the following result about Sidorenko's conjecture.

\medskip

\begin{theorem}\label{forestgluing} Every element in $\mathfrak{A}_2$ is a family of witness measures. Consequently every graph in $\mathfrak{S}_2$ satisfies the Sidorenko conjecture. 
\end{theorem}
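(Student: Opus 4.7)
The plan is to argue by induction on the construction of $f \in \mathfrak{A}_2$, using the strengthened inductive hypothesis
$$ D(f) \le |E(H)| D_e - \delta(f) D_v, \qquad \delta(f) \ge 0, $$
where $\delta$ is the non-negative combinatorial invariant defined recursively by $\delta(\tau(e,G)) := 0$ and, for $g = C(f_1, f_2, \beta_1, \beta_2) \in \mathfrak{A}_2$,
$$ \delta(g) := \delta(f_1) + \delta(f_2) + c_F - |E_1 \cap E_2|. $$
Here $F = E_1 \cup E_2$ is the forest spanned by the glued vertex set in the combined frame $H$, with $c_F$ connected components, and $E_1 \cap E_2$ is the set of edges appearing both in $H_1$ and in $H_2$ among the labeled vertices (these are precisely the edges lost to deletion of duplicates). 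Since $\delta(f) \ge 0$, this hypothesis implies the bound $D(f) \le |E(H)| D_e$ needed to conclude that $f$ is a witness measure.

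The base case is immediate: $D(\tau(e,G)) = D_e = |E(e)| D_e - 0 \cdot D_v$. For the inductive step, the inclusion-exclusion identity $(\ref{inex1})$ gives $D(g) = D(f_1) + D(f_2) - D(\mu_3)$. Since $\mu_3$ is a probability measure on $V(G)^{|S|}$ whose marginal on each edge of $F$ equals $\tau(e,G)$, the maximum-entropy principle for distributions with prescribed marginals on a forest (equivalent to a Shearer-type subadditivity estimate) yields
$$ D(\mu_3) \ge D(\mu_F) = k_F D_e - (k_F - c_F) D_v, $$
where $\mu_F$ is the canonical tree-Markov distribution along $F$ and $k_F = |E(F)|$. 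Applying the inductive hypothesis to $f_1, f_2$, substituting, and using the identity $|E(H_1)| + |E(H_2)| = |E(H)| + |E_1 \cap E_2|$, one obtains
$$ D(g) \le |E(H)| D_e - |E_1 \triangle E_2| D_e + (k_F - c_F - \delta(f_1) - \delta(f_2)) D_v. $$
The basic inequality $D_e \ge D_v$ (a consequence of the chain rule of relative entropy applied to $\tau(e,G)$ and its endpoint marginal $\kappa$) allows us to replace $|E_1 \triangle E_2| D_e$ by $|E_1 \triangle E_2| D_v$, collapsing the bound to exactly $D(g) \le |E(H)| D_e - \delta(g) D_v$, and Sidorenko's conjecture for $g$ follows.

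The main technical obstacle is establishing that $\delta(g) \ge 0$ is preserved under every valid $\mathfrak{A}_2$ coupling — equivalently, that $\delta(f_1) + \delta(f_2) + c_F \ge |E_1 \cap E_2|$ at every step. The cleanest route is to prove the refined inductive lower bound $\delta(f) \ge |V(H)| - 2 c(H)$, which is manifestly non-negative since every component of a frame in $\mathfrak{A}_2$ contains at least one edge and hence at least two vertices. Propagating this refinement through the recursion reduces to a combinatorial inequality linking the component structure of the gluing forest to the merging of components of $H_1$ and $H_2$; this is best analyzed via the bipartite ``component super-graph'' whose vertices are the components of $H_1$ and of $H_2$ that contain a labeled vertex, and whose edges are indexed by the labeled vertices themselves.
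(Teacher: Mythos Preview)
Your overall strategy --- a strengthened induction hypothesis of the form $D(f)\le |E(H)|D_e-\delta(f)D_v$ together with a Shearer-type lower bound on the relative entropy of the marginal on the glued forest --- is exactly the paper's approach. The forest bound you quote, $D(\mu_3)\ge k_F D_e-(k_F-c_F)D_v$, is precisely Lemma~\ref{forest} (using the forest identity $|S|=k_F+c_F$), and the inductive use of (\ref{inex1}) is the same.

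Where you diverge from the paper is in the bookkeeping. The paper does not introduce a recursively defined $\delta(f)$; it works directly with the closed form
\[
\delta(f)=2|E(H)|-|V(H)|.
\]
Under the paper's reading of $\mathfrak A_2$ (stated in the proof: the subgraphs spanned on $\beta_1([n])$ and $\beta_2([n])$ are \emph{identical} forests $H_3$), one has $E_1=E_2$, hence $|E_1\triangle E_2|=0$, and both $|E(\cdot)|$ and $|V(\cdot)|$ satisfy inclusion--exclusion across the gluing. A two-line computation then shows that $2|E|-|V|$ is preserved by the recursion, and non-negativity is immediate because frames in $\mathfrak A$ have no isolated vertices. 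In particular the step invoking $D_e\ge D_v$ is not needed, and the whole ``main technical obstacle'' you identify disappears.

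Your recursive $\delta$ actually equals $2|E(H)|-|V(H)|$ in this setting (check: $\delta(g)=\delta(f_1)+\delta(f_2)+c_F-k_F$ and $|S|=k_F+c_F$), so your refined bound $\delta(f)\ge |V(H)|-2c(H)$ and the component super-graph argument are unnecessary detours. That said, if you do want to push the super-graph argument through, note that the edges of the auxiliary bipartite graph should be indexed by the \emph{components of $F$} rather than by the labeled vertices: two labeled vertices lying in the same tree of $F$ always produce parallel edges, and it is only after this reduction that the estimate $c(\Gamma)\ge L+R-c_F$ gives what you need. As written, indexing by labeled vertices yields only $c(H)+|S|\ge c(H_1)+c(H_2)$, which is too weak.

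In short: the idea is right and matches the paper; the execution is over-engineered, and the part you flag as the hard step is exactly what the paper's closed-form invariant makes trivial.
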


We need the next lemma.

\begin{lemma}\label{forest} Let $H$ be a forest and $G$ be an arbitrary graph. Let $\mu$ be a probability measure on $\hom (H,G)$ such that the marginals on the edges of $H$ are identical with $\tau(e,G)$ and the marginals on the vertices are identical with $\kappa$. Then $$D(\mu)\geq D_e|E(H)|-D_v(2|E(H)|-|V(H)|).$$
\end{lemma}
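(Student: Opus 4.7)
The plan is to convert the claimed lower bound on relative entropy into an upper bound on ordinary Shannon entropy and then induct on $|E(H)|$. Using the identities $D(\mu)=|V(H)|\log|V(G)|-H(\mu)$, $D_e = 2\log|V(G)|-H(\tau(e,G))$, and $D_v=\log|V(G)|-H(\kappa)$, a short algebraic check (the $\log|V(G)|$ terms cancel via $2|E(H)|-(2|E(H)|-|V(H)|)=|V(H)|$) shows that the target inequality is equivalent to
$$H(\mu)\leq |E(H)|\,H(\tau(e,G))-(2|E(H)|-|V(H)|)\,H(\kappa).$$
This reformulation makes clear why forests are the natural setting: the right hand side is exactly the entropy of the Markov distribution on $H$ with the given edge and vertex marginals, and along a forest such a Markov coupling is well-defined and maximal-entropy.

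First I would establish the base case $|E(H)|=0$, where $H$ consists of $|V(H)|$ isolated vertices and the right hand side reduces to $|V(H)|\,H(\kappa)$. This follows immediately from subadditivity of entropy together with the hypothesis that every vertex marginal of $\mu$ equals $\kappa$. For the inductive step, since $H$ is a forest with at least one edge, it has a leaf $v$ adjacent to some $u$ via an edge $e$; set $H'=H\setminus v$ and let $\mu'$ be the marginal of $\mu$ on $V(G)^{V(H')}$. The edges and vertices of $H'$ are edges and vertices of $H$, so $\mu'$ inherits the marginal hypotheses and the inductive hypothesis applies to it. The chain rule gives
$$H(\mu)=H(\mu')+H(X_v\mid X_{V(H')}),$$
and since conditioning on more variables cannot increase entropy,
$$H(X_v\mid X_{V(H')})\leq H(X_v\mid X_u)=H(\tau(e,G))-H(\kappa),$$
using that the pair $(X_u,X_v)$ is distributed as $\tau(e,G)$ and $X_u$ is distributed as $\kappa$. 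Because removing a leaf decreases both $|E(H)|$ and $|V(H)|$ by one, the quantity $2|E(H)|-|V(H)|$ also increases by exactly one when passing from $H'$ to $H$, so adding the inductive bound for $H(\mu')$ to the leaf bound reproduces the target inequality. Translating back via the identities above yields the lemma.

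The main obstacle is really bookkeeping rather than any deep inequality: one has to verify that after removing the leaf, the induction hypothesis still applies (which it does, since edge and vertex marginals of $\mu'$ are the corresponding marginals of $\mu$) and that the arithmetic of $2|E|-|V|$ behaves correctly under leaf removal. The Shannon ingredients used, namely the chain rule, subadditivity, and that conditioning reduces entropy, are the standard tools and are consistent with the information-theoretic framework introduced earlier; in particular, equality holds precisely when $\mu$ is the conditionally independent (Markov) coupling along the forest, which is reassuring and foreshadows the role this lemma will play in bounding $D(\mu)$ for schemes in $\mathfrak{A}_2$.
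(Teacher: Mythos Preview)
Your proof is correct. Both your argument and the paper's are induction along the forest using submodularity of entropy, but the packaging differs. The paper inducts on $|V(H)|$, splits $V(H)=V_1\cup V_2$ at a cut vertex, stays in the relative-entropy world, and invokes Lemma~\ref{nagyobb} together with the inclusion--exclusion formula~(\ref{inex1}). You instead convert once to Shannon entropy, induct on $|E(H)|$ by peeling a leaf, and use the chain rule plus ``conditioning reduces entropy'' directly. Your leaf removal is the special case $V_1=V(H')$, $V_2=\{u,v\}$ of the paper's decomposition, and your conditioning inequality is exactly the content of Lemma~\ref{nagyobb} unwound; so the two arguments are equivalent at the level of inequalities. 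Your version is a bit more self-contained (it does not cite Lemma~\ref{nagyobb}) and your base case $|E(H)|=0$ via subadditivity cleanly handles isolated vertices, whereas the paper's terse ``$|V(H)|\le 2$ is trivial'' already implicitly uses the same subadditivity when $|V(H)|=2$ with no edge. One small wording fix: you say $2|E(H)|-|V(H)|$ ``increases by exactly one when passing from $H'$ to $H$,'' and indeed $(2|E(H)|-|V(H)|)-(2|E(H')|-|V(H')|)=2-1=1$, which is what your final arithmetic uses; just make sure the sentence is read that way.
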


\begin{proof} We go by induction. If $|V(H)|\leq 2$ then the statement is trivial. If $|V(H)\geq 3$ then there is a decomposition $V(H)=V_1\cup V_2$ such that $V_1\cap V_2$ is a single vertex $v$, there is no edge between $V_1$ and $V_2$ and $|V_1|,|V_2|<|V(H)|$. The measure $\mu$ is a couping of its marginals on $V_1$ and $V_2$ over its marginal on the vertex $v$. The induction hypothesis together with lemma \ref{nagyobb} completes the proof.
\end{proof}

Now we are ready to prove theorem \ref{forestgluing}.

\begin{proof} Assume that $f\in\mathfrak{A}_2$ is a probability scheme with frame $H$. We prove by induction that $D(f)\leq D_e|E(H)|-D_v(2|E(H)|-|V(H)|)$ which is clearly enough since $H$ has no isolated points and thus $2|E(H)|-|V(H)|$ can not be negative. It is trivial that $\tau(e,G)$ satisfies this inequality. Assume that $f_1$ and $f_2$ are probability schemes with frames $H_1$ and $H_2$. Assume that $\{\beta_i:[n]\rightarrow V(H_i)\}_{i=1,2}$ defines a joint vertex factor such that the images of $\beta_1$ and $\beta_2$ are identical forests. We call this forest $H_3$. Let $H$ be the frame of $g=C(f_1,f_2,\beta_1,\beta_2)$. Then from lemma \ref{forest}  and (\ref{inex1}) it follows that $$D(g)\leq D(f_1)+D(f_2)-(D_e|E(H_3)|-D_v(2|E(H_3)|-|V(H_3)|)).$$ Using the induction hypothesis the proof is complete.
\end{proof}

\bigskip

\section{Set functions and the genearal theorem}

\bigskip

Theorem \ref{forestgluing} is not the limitation of our method. In this chapter we describe a far reaching generalization of the idea in the proof of theorem \ref{forestgluing}.
We need some notation. 

Let $V$ be a finite set and let $H$ be a graph (or a hypergraph) with vertex set $V$. We will work in the linear space $\mathbb{R}^{2^V}$ of all set-functions on $V$. For $S\subseteq V$ we denote by $1_S$ the set-function that takes the value $1$ on $S$ and takes the value $0$ on any other subset in $V$. For a pair of sets $A,B\subseteq V$ let $$t_{A,B}:=1_{A\cup B}-1_A-1_B+1_{A\cap B}.$$ For an arbitrary binary relation $b\subseteq 2^V\times 2^V$ on the subsetes of $V$ let $$W_b:=\langle t_{A,B}~|~(A,B)\in b\rangle_{\mathbb{R}}.$$
 For a subset $A\in V$ let $s_H(A) := -1_A+\sum_{e\in E(A)}1_e$ where $E(A)$ is the set of edges in $H$ spanned by $A$. We denote by $Q_V$ the sum of the one dimensional space $\langle 1_{\emptyset}\rangle_{\mathbb{R}}$ and the cone (set of all non-negative linear combinations) spanned by the vectors $t_{A,B}$ and $1_A$ where $A,B$ runs through all possible pairs of subsets in $V$.

\medskip

Let $f$ be a coupling structure in $\mathfrak{A}$ with frame $H$. For a subset $A\subseteq V$ we denote by $f_A$ the restriction of $f$ to the coordinates in $A$. In other words $f_A$ is the vertex factor of $f$ with respect to the embedding of $A$ into $V$. If for every graph $G$ we have that $f_A$ and $f_B$ are conditionally independent over $f_{A\cap B}$ then we say that $(A,B)$ is a conditionally independent pair (with respect to $f$). We denote the set of all conditionally independent pairs by $CI(f)$. Let $IS(f)$ denote the set of pairs $(A,B)$ such that $f_A$ is isomorphic to $f_B$ i.e. there is a bijection $\beta:A\mapsto B$ such that $f_A=f|_\beta$. We denote by $C_f$ the linear space $W_{CI(f)}$ and by $I_f$ the space $\langle 1_A-1_B~|~(A,B)\in IS(f)\rangle_{\mathbb{R}}$.  

Using these notations we have the following general theorem.

\begin{theorem}\label{general} If $s_H(A)\in C_f+I_f+Q_V$ holds for some $A\subseteq V$ then the graph $(A,E(A))$ satisfies Sidorenko's conjecture.
\end{theorem}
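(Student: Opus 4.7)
The plan is to package the coupling structure $f$ into a single linear functional on $\mathbb{R}^{2^V}$ and then read off the theorem by pairing against $s_H(A)$. Concretely, fix a graph $G$ and define $\Phi\colon\mathbb{R}^{2^V}\to\mathbb{R}$ by $\Phi(1_S):=D(f_S)$ on basis vectors, extended linearly. (Here $D(f_S)$ is the relative entropy of the vertex factor $f_S$ against the uniform measure $\nu_S$ on $V(G)^S$, so $D(f_{\{u,v\}})=D_e$ for every edge $\{u,v\}\in E(H)$ because the edge marginals of a coupling structure all equal $\tau(e,G)$.) Since the hypothesis $s_H(A)\in C_f+I_f+Q_V$ is uniform in $G$, it suffices to control $\Phi$ on the three generating families.

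The first step is to verify three properties of $\Phi$. \emph{(i)} On a generator $t_{A,B}$ with $(A,B)\in CI(f)$, the uniform references $\nu_{A\cup B},\nu_A,\nu_B,\nu_{A\cap B}$ form a conditionally independent coupling (being product measures), so (\ref{inex1}) gives $\Phi(t_{A,B})=0$, and $\Phi$ vanishes on $C_f$. \emph{(ii)} For $(A,B)\in IS(f)$, any bijection identifying $f_A$ with $f_B$ also identifies $\nu_A$ with $\nu_B$, hence $D(f_A)=D(f_B)$ and $\Phi$ vanishes on $I_f$. \emph{(iii)} On the cone $Q_V$: $\Phi(1_\emptyset)=0$; $\Phi(1_A)\geq 0$ by (\ref{pos}); and for arbitrary $A,B\subseteq V$ the submodularity $\Phi(t_{A,B})\geq 0$ holds because $f_{A\cup B}$ is some coupling of $f_A$ and $f_B$ over $f_{A\cap B}$, so Lemma~\ref{nagyobb} followed by (\ref{inex1}) applied to the conditionally independent coupling yields $D(f_{A\cup B})\geq D(f_A)+D(f_B)-D(f_{A\cap B})$. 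Together, $\Phi\geq 0$ on $Q_V$ and $\Phi=0$ on $C_f+I_f$.

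Assuming $s_H(A)\in C_f+I_f+Q_V$, linearity gives $\Phi(s_H(A))\geq 0$. Unwinding the definition,
$$\Phi(s_H(A)) \;=\; -D(f_A)+\sum_{e\in E(A)}D(f_e) \;=\; -D(f_A)+|E(A)|\,D_e,$$
hence $D(f_A)\leq |E(A)|\,D_e$. Since $f$ is supported on $\Hom(H,G)$ and $E(A)\subseteq E(H)$, the vertex factor $f_A$ is supported on $\Hom((A,E(A)),G)$. Applying (\ref{jen}) to the graph $(A,E(A))$ gives $D(\tau((A,E(A)),G))\leq D(f_A)\leq|E(A)|\,D_e$, which is precisely (\ref{entsid}) for $(A,E(A))$, so Sidorenko's conjecture holds for that subgraph.

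The delicate point I expect is the submodularity clause $\Phi(t_{A,B})\geq 0$ for arbitrary (not necessarily conditionally independent) pairs $A,B$: it requires invoking Lemma~\ref{nagyobb} with the uniform product reference $\nu_{A\cup B}$ playing the role of the conditionally independent coupling of $\nu_A$ and $\nu_B$ over $\nu_{A\cap B}$, which works only because $\nu$ is a product. Once this is noted, the remainder is a formal pairing argument reorganizing the identities and inequalities from Section~2 into the convex-geometric language of $C_f$, $I_f$ and $Q_V$.
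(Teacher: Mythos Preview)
Your proof is correct and is essentially the same argument as the paper's: your linear functional $\Phi$ is exactly the pairing $(\cdot,g)$ with the set function $g(B):=D(f_B(G))$ that the paper introduces, and your items (i)--(iii) are the paper's observations that $g$ vanishes on $C_f$ and $I_f$ and is supermodular (nonnegative on $Q_V$) via Lemma~\ref{nagyobb}. You spell out a couple of points the paper leaves implicit---notably that the uniform references $\nu_S$ are product measures so $\nu_{A\cup B}$ is the conditionally independent coupling of $\nu_A$ and $\nu_B$, which is exactly what makes Lemma~\ref{nagyobb} applicable for the supermodularity step, and that $f_A$ is supported on $\Hom((A,E(A)),G)$ so (\ref{jen}) applies to the induced subgraph---but the underlying argument is identical.
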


\begin{proof}  Let $g$ be the set function defined by $g(B):=D(f_B(G))$ for $B\subseteq V(H)$. Using lemma \ref{nagyobb} we have that $g$ is a super modular function a thus $(g,q)\geq 0$ for every $q\in Q_V$. We have by (\ref{inex1}) that $(g,c)=0$ holds for every element in $c\in C_f$. If $(B_1,B_2)\in IS(f)$ then $g(B_1)=g(B_2)$. It follows that $(g,w)=0$ holds for every $w\in I_f$. We obtain that $(s_H(A),g)\geq 0$. This means that $g_A$ is a witness measure for $A$ and thus $(A,E(A))$ satisfies Sidorenko's conjecture.
\end{proof}

\medskip

Note that since $C_f+I_f+Q_V$ is a convex polytope it is a finite linear-programming problem to decide weather $s_A\in C_f+I_f+Q_V$ holds.

\bigskip

\section{Refelction complexes}\label{chaprefcomp}

\bigskip


A hypergraph $M$ is a pair of a vertex set $V$ and edge set $E\subseteq 2^V$. We introduce the technical notion of a $b$-hypergraph which is a hypergraph $M$ together with a symmetric binary relation $B\subseteq 2^V\times 2^V$ on the subsets of $V$. The sub-$b$-hypergraph on $W\subseteq V$ is a $b$-hypergraph with edge set $E(W)=\{L~|~L\in E,L\subseteq W\}$ and relation $B(W)=(2^W\times 2^W)\cap B$.

\medskip

We describe two gluing operations for $b$-hypergraphs. (Both operations are meaningful for hypergraphs without a binary operation.) Assume that $M_1=(V_1,E_1,B_1)$ and $M_2=(V_2,E_2,B_2)$ are two $b$-hypergraphs and $\phi_1:F\rightarrow V_1, \phi_2: F\rightarrow V_2$ are injective maps for some label set $F$. We introduce a set $V$ together with injective maps $\tau_1:V_1\rightarrow V,\tau_2:V_2\rightarrow V$ such that
\begin{enumerate}
\item $\tau_1(v_1)=\tau_2(v_2)$ if and only if there is $f\in F$ with $\phi_1(f)=v_1$ and $\phi_2(f)=v_2$.
\item $V=\tau_1(V_1)\cup\tau_2(V_2)$.
\end{enumerate}
Note that there is a natural embedding $\phi:F\rightarrow V$ defined by $\phi=\tau_1\circ\phi_1=\tau_2\circ\phi_2$. We define the edges set $E$ of a new $b$-hypergraph denoted by $M_1\cup_{\phi_1,\phi_2} M_2$ with vertex set $V$ as $\tau_1(E_1)\cup\tau_2(E_2)$ and the set $B$ of binary relations on $2^V$ as $\tau_1(B_1)\cup\tau_2(B_2)$. Informally speaking, $M_1\cup_{\phi_1,\phi_2} M_2$ is obtained by first taking a disjoint copy of $M_1$ and $M_2$ and then we identify vertices with the same label. We will need another construction denoted by $M_1\cup^*_{\phi_1,\phi_2} M_2$ that is obtained from $M_1\cup_{\phi_1,\phi_2} M_2$ by extending the edge set end the binary relations.  We add $(\tau_1(K_1),\tau_2(K_2))$ to $B$ and $\tau_1(K_1)\cup\tau_2(K_2)$ to $E$ for all pairs $K_1\in E_1,K_2\in E_2$ with $\phi_1(F)\subseteq K_1,\phi_2(F)\subseteq K_2$. When talking about $M_1\cup^*_{\phi_1,\phi_2} M_2$ we will refer to $E$ (resp. $B$) as edges (resp. relations) of the first type and we call the remaining (added) edges (resp. relations) edges (resp. relations) of the second type. 

\medskip

Let $M=(V,E,B)$ be a $b$-hypergraph. Let $L\in E$ be an edge and $N$ be the sub-$b$-hypergraph on $L$. Let $X\subseteq L$ be some set and let $\phi_1:X\rightarrow V$ and $\phi_2:X\rightarrow L$ be the identical embedding maps. We will use the notation  $r_{L,X}(M)$ for the $b$-hypergraph $M\cup^*_{\phi_1,\phi_2} N$. 

\medskip

\begin{definition}\label{refcomp}  Let $M_0=(V_0,E_0,B_0)$ be a $b$-hypergraph. A $b$-hypergraph $M$ is called an {$M_0$-\bf reflection complex} if there is a sequence of $b$-hypergraphs $\{M_i=(V_i,E_i,B_i)\}_{i=1}^n$ and pairs $\{L_i\in E_i,X_i\subseteq L_i\}_{i=0}^{n-1}$ such that $M_i=r_{L_{i-1},X_{i-1}}(M_{i-1})$ holds for $1\leq i\leq n$ and $M=M_n$.  If $M_0$ is the $b$-hypergraph with vertex set $V_0=\{1,2\}$, edge set $E_0=\{\{1,2\}\}$ and empty relation $B_0$ then $M$ will be called a {\bf reflection complex} and $M_0$ will be called the {\bf trivial} reflection complex.
\end{definition}

In other words $M$ is a reflection complex if it can be obtained from the trivial reflection complex by a finite sequence of operations of type $r_{L,X}$. It follows from this definition by induction that the full vertex set of $M$ is an edge of $M$.

\begin{definition} We say that a $b$-hypergraph $M=(V,E,B)$ is {\bf $k$-reducible} if $V\in E$ and every edge $T$ of $M$ with $|T|>k$ ha a proper decomposition i.e. $T=A_1\cup A_2$ for some edges $A_1$ and $A_2$ of $M$ with $\max(|A_1|,|A_2|)<|T|$ and $(A_1,A_2)\in B$. 
\end{definition}

\begin{lemma}\label{reducible} Every reflection complex is $2$-reducible.
\end{lemma}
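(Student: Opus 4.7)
The plan is to induct on the number $n$ of reflection operations used to build $M$ from the trivial reflection complex $M_0$. The base case $n=0$ is immediate: $M_0$ has a single edge of size two, so there is nothing to check. For the inductive step I assume $M=(V,E,B)$ is $2$-reducible (and that $V\in E$, which should be maintained as a side invariant), and then analyze $M'=r_{L,X}(M)$.

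To do this I will first sort the edges of $M'$ by their origin under the construction of $M_1\cup^{*}_{\phi_1,\phi_2}M_2$. Concretely the edges of $M'$ come in three flavors: (a) $\tau_1(K)$ for $K\in E(M)$; (b) $\tau_2(K)$ for $K\in E(M)$ with $K\subseteq L$ (the edges of the sub-$b$-hypergraph $N$ on $L$); and (c) second-type edges of the form $\tau_1(K_1)\cup\tau_2(K_2)$ with $K_1\in E(M)$, $K_2\in E(M)\cap 2^L$, and $X\subseteq K_1,K_2$. The relations in $B'$ likewise split into a first type (coming from $\tau_1(B)$ and $\tau_2(B\cap 2^L\times 2^L)$) and a second type $(\tau_1(K_1),\tau_2(K_2))$ for the same index pairs as in (c).

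For an edge $T$ of $M'$ with $|T|>2$ I now supply a proper decomposition. In case (a), the induction hypothesis gives a decomposition of the underlying edge $K\in E(M)$ into $K=A_1\cup A_2$ with $(A_1,A_2)\in B$; applying $\tau_1$ yields a proper decomposition of $T$ with a first-type relation. Case (b) is the same argument applied to $N$: since both pieces $A_1,A_2$ of the decomposition of $K\subseteq L$ inherit the property $A_i\subseteq L$, they lie in $E(N)$ and the relation $(A_1,A_2)$ lies in $B\cap 2^L\times 2^L$, so $(\tau_2(A_1),\tau_2(A_2))$ is a first-type relation. In case (c) I take the canonical split $A_1=\tau_1(K_1),\ A_2=\tau_2(K_2)$, which is immediately a second-type relation. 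Since $\tau_1(K_1)\cap\tau_2(K_2)=\tau_1(X)=\tau_2(X)$, one has $|T|=|K_1|+|K_2|-|X|$, so $|A_i|<|T|$ as long as both $K_1\supsetneq X$ and $K_2\supsetneq X$.

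The main obstacle, and really the only subtle point, is the degenerate subcase of (c) where $K_1=X$ or $K_2=X$: then $|A_1|=|T|$ or $|A_2|=|T|$ and the natural split fails. I would dispose of this by observing that if $K_1=X\subseteq K_2$ then $\tau_1(K_1)=\tau_2(X)\subseteq\tau_2(K_2)$, so $T=\tau_2(K_2)$ is actually already an edge of type (b), and symmetrically $K_2=X$ forces $T=\tau_1(K_1)$ to be of type (a). These degenerate type-(c) edges are therefore handled by the previous cases. Finally, to maintain the side invariant I note that $V(M')=\tau_1(V)\cup\tau_2(L)$, and since $V\in E(M)$ by the inductive invariant and $X\subseteq V,L$, the pair $(V,L)$ produces $V(M')$ as a second-type edge of $M'$, closing the induction.
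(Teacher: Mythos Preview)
Your proof is correct and follows essentially the same route as the paper's: induct on the construction, and for an edge of the new complex either inherit a proper decomposition from the inductive hypothesis (first-type edges) or use the built-in split $T=\tau_1(K_1)\cup\tau_2(K_2)$ together with the second-type relation $(\tau_1(K_1),\tau_2(K_2))$. Your write-up is in fact slightly more careful than the paper's in two places: you explicitly handle the degenerate second-type case $K_1=X$ or $K_2=X$ (where the canonical split fails to be proper but the edge coincides with a first-type edge and is covered there), and you fold the verification that $V(M')\in E(M')$ into the induction rather than treating it as a separate remark.
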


\begin{proof} Let $n$ be the number of vertices of a reflection complex $N$. We prove the statement by induction on $n$. The case $n=2$ is trivial. If $n>2$ we can assume by induction that $N=r_{L,X}(M)$ for some reflection complex $M=(V,E)$ that satisfies the theorem. We use the notation from the definition of $r_{L,X}$ and $\cup^*$. If $T$ is of the first type then the induction hypothesis guarantees the decomposition. If $T$ of the second type then its decomposition $T=\tau_1(K_1)\cup\tau_2(K_2)$ guaranteed by the definition of $\cup^*$ is a proper decomposition.   
\end{proof}

\begin{definition} For a general hypergraph $M$ with edge set $E$ we say that the set $\mathcal{F}(M):=\{K\in E~|~|K|=2\}$ is the {\bf frame of $M$}. 
\end{definition}

\begin{lemma}\label{frun} Let $M=(V,E,B)$ be a reflection complex with $L\in E, X\subseteq L$. Then $\mathcal{F}(r_{L,X}(M))=\tau_1(\mathcal{F}(M))\cup\tau_2(\mathcal{F}(N))$ where $N$ is the $b$-hypergraph spanned on $L$.
\end{lemma}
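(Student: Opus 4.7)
The plan is to unwind the definition of $r_{L,X}$ and check that although the $\cup^*$ operation creates new ``second-type'' edges, none of them have size exactly two unless they coincide with a first-type edge. Combined with the easy observation that the first-type edges of $r_{L,X}(M)$ are precisely $\tau_1(E)\cup\tau_2(E(N))$, this will immediately give $\mathcal{F}(r_{L,X}(M))=\tau_1(\mathcal{F}(M))\cup\tau_2(\mathcal{F}(N))$.

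First I would establish a small auxiliary fact by induction on the length of the construction sequence defining $M$: every edge of a reflection complex has cardinality at least $2$. The base case (trivial complex with single edge $\{1,2\}$) is immediate, and in the inductive step first-type edges inherit the bound, while any second-type edge $\tau_1(K_1)\cup\tau_2(K_2)$ has size at least $|K_1|\geq 2$ since $\tau_1$ is injective.

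The key computation is the intersection formula. Take a second-type edge $\tau_1(K_1)\cup\tau_2(K_2)$ arising from $X\subseteq K_1\in E$ and $X\subseteq K_2\in E(N)$. Here the label set is $F=X$ and the maps $\phi_1:X\to V$, $\phi_2:X\to L$ are inclusions, so condition (1) of the gluing construction says $\tau_1(v_1)=\tau_2(v_2)$ forces $v_1=v_2\in X$. Hence $\tau_1(K_1)\cap\tau_2(K_2)=\tau_1(K_1\cap K_2\cap X)=\tau_1(X)$ (using $X\subseteq K_1\cap K_2$), giving
\[
|\tau_1(K_1)\cup\tau_2(K_2)|=|K_1|+|K_2|-|X|.
\]

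With this in hand the main check reduces to: if $|K_1|+|K_2|-|X|=2$ then the second-type edge is already a first-type edge. Since $|K_1|,|K_2|\geq 2$ and $|X|\leq\min(|K_1|,|K_2|)$, the equation forces either $|K_1|=2$ with $K_2=X$, or $|K_2|=2$ with $K_1=X$. In the first case $\tau_2(K_2)=\tau_2(X)=\tau_1(X)\subseteq\tau_1(K_1)$, so the second-type edge equals $\tau_1(K_1)\in\tau_1(\mathcal{F}(M))$; symmetrically in the second case it equals $\tau_2(K_2)\in\tau_2(\mathcal{F}(N))$. Thus every size-$2$ edge of $r_{L,X}(M)$ is of first type, i.e.\ lies in $\tau_1(\mathcal{F}(M))\cup\tau_2(\mathcal{F}(N))$, and the reverse inclusion is obvious from injectivity of $\tau_1,\tau_2$.

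The only mildly tricky point, and the place where one has to be careful, is the intersection computation $\tau_1(K_1)\cap\tau_2(K_2)=\tau_1(X)$; once this is pinned down via the identification rule, the combinatorics of the case analysis is routine.
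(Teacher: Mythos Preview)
Your proof is correct and follows essentially the same line as the paper's: both use the auxiliary fact that every edge of a reflection complex has size at least two, and then argue that second-type edges do not contribute new frame edges. The paper's proof is a single sentence asserting that second-type edges therefore have size at least three; your version is more careful, explicitly computing $|\tau_1(K_1)\cup\tau_2(K_2)|=|K_1|+|K_2|-|X|$ and handling the boundary cases where this equals two but the edge coincides with a first-type one, a degenerate situation the paper's terse argument glosses over.
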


\begin{proof} Observe that every edge of a reflection complex is of size at least two and so edges of the second type in the construction of $r_{L,X}(M)$ have size at least three. This means that edges of the second type don't contribute to $\mathcal{F}(r_{L,X}(M))$.
\end{proof}

\section{Thick reflection complexes and graphs}

Let $H$ be a graph on the vertex set $V$ and let $X\subseteq V$. We introduce the set function $$h_H(X):=-1_X+\sum_{e\in E(F)}1_e-\sum_{v\in X}1_v(\deg(v)-1)$$ where $F$ is the graph spanned on $X$ in $H$ and $\deg(v)$ is the degree of $v$ in $F$.

\begin{definition}\label{defthick} Let $M=(V,E,B)$ be a $b$-hypergraph with $H=\mathcal{F}(M)$. We say that $M$ is {\bf thick} if $$h_H(V)\in W_B+Q_V.$$ Frames of thick reflection complexes are called {\bf thick graphs}. 
\end{definition}

\bigskip

The name thick refers to the fact that the cone $W_B+Q_V$ (which is a convex polytope) is large enough to contain the vector $h_H(V)$. Observe that if $M$ is the trivial reflection complex then $h_V=0$ and thus the trivial reflection complex is thick. 
The next theorem will be proved in a chapter \ref{rccs}.

\begin{theorem}\label{thicksid} Thick graphs satisfy Sidorenko's conjecture.
\end{theorem}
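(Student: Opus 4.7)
The plan is to reduce Theorem~\ref{thicksid} to Theorem~\ref{general} by attaching to every thick reflection complex $M=(V,E,B)$ with frame $H=\mathcal{F}(M)$ a coupling structure $f=f_M\in\mathfrak{A}$ such that every relation in $B$ is a conditionally independent pair of $f$, so that $W_B\subseteq C_f$. Granting this, thickness $h_H(V)\in W_B+Q_V$ combined with the identity
\[
s_H(V)=h_H(V)+\sum_{v\in V}(\deg(v)-1)\,1_v
\]
places $s_H(V)$ in $W_B+Q_V\subseteq C_f+Q_V$; the second summand lies in $Q_V$ because, after discarding isolated vertices (which do not affect Sidorenko's conjecture), every coefficient $\deg(v)-1$ is non-negative and each $1_v$ is in $Q_V$ by definition. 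Theorem~\ref{general} applied with $A=V$ then yields Sidorenko for $(V,E(V))=H$.

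I would build $f_M$ by induction on the length of the reflection sequence defining $M$. The base case is $f_{M_0}=\tau(e,G)$. For the step $M=r_{L,X}(M')$, let $N$ be the sub-$b$-hypergraph of $M'$ on $L$ and set
\[
f_M:=C(f_{M'},f_N,\phi_1,\phi_2),
\]
where $f_N\in\mathfrak{A}$ is the coupling structure for $N$ delivered by the strengthened inductive claim below, and the gluing is over the common $X$-marginal $f_{M'}|_X=f_N|_X$. Lemma~\ref{frun} identifies the frame of $f_M$ with $\mathcal{F}(M)$, and the edge and vertex marginals remain $\tau(e,G)$ and $\kappa$ respectively, because conditionally independent couplings preserve the marginals of the pieces being glued.

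The relations introduced by $r_{L,X}$ split into two types. Those of the first type are inherited from $B_{M'}$ and $B_N$, and by induction each is a conditionally independent factor of $f_{M'}$ or $f_N$; conditional independence of a pair is transported through a conditionally independent coupling, so it persists in $f_M$. Those of the second type, namely pairs $(\tau_1(K_1),\tau_2(K_2))$ with $K_1,K_2\supseteq X$, intersect only inside the image of $X$, so the required conditional independence is exactly the one installed by $C(\cdot,\cdot,\phi_1,\phi_2)$. Hence $B\subseteq CI(f_M)$ and $W_B\subseteq C_f$, completing the reduction.

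The hard part is the strengthened inductive claim used above: for every edge $L$ of a reflection complex $M$, the sub-$b$-hypergraph $N_L$ on $L$ is itself realisable as a reflection complex whose associated coupling structure coincides with $f_M|_L$. Lemma~\ref{reducible} is the main hint, since it supplies a binary decomposition $L=A_1\cup A_2$ by edges of $M$ with $(A_1,A_2)\in B$ which, iterated, should yield a reflection sequence for $N_L$. The main bookkeeping obstacle is tracking how second-type edges and relations introduced by ancestral $\cup^*$ operations appear inside $N_L$, and verifying that they participate in $N_L$'s reflection sequence in the same conditionally independent manner as in $M$. Once this sub-complex principle is in place, the induction above goes through routinely.
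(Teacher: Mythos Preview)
Your outline is essentially the paper's proof: it proves Theorem~\ref{thicksid} by invoking Theorem~\ref{refcoup} to obtain a coupling structure $f\in\mathfrak{A}$ with $B\subseteq CI(f)$, then uses thickness $h_H(V)\in W_B+Q_V$ together with the non-negativity of $s_H(V)-h_H(V)$ to place $s_H(V)$ in $W_B+Q_V\subseteq C_f+Q_V$, and closes with Theorem~\ref{general}. Your reduction is identical.

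Where you diverge is in what you flag as the ``hard part'', and here you have taken an unnecessary detour. You do \emph{not} need the sub-$b$-hypergraph $N_L$ on an edge $L$ to be a reflection complex in its own right, nor do you need a separately constructed $f_N$. In the inductive step $M=r_{L,X}(M')$, simply take the second factor of the coupling to be the marginal $f_{M'}|_L$ itself; then the $X$-marginals of the two factors agree trivially, and all first-type relations are inherited because marginals on subsets of $V(M')$ are unchanged by the coupling. The only additional statement to carry through the induction is that $f_{M'}|_A\in\mathfrak{A}$ for every edge $A$ of $M'$ (so that the resulting $f_M$ stays in $\mathfrak{A}$). This is immediate for first-type edges by induction, and for a second-type edge $K=\tau_1(K_1)\cup\tau_2(K_2)$ with $X\subseteq K_1,K_2$ one observes that $f_M|_K$ is itself the conditionally independent coupling of $f_{M'}|_{K_1}\in\mathfrak{A}$ and $f_{M'}|_{K_2}\in\mathfrak{A}$ over their common $X$-marginal. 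That is precisely the content of Theorem~\ref{refcoup}, and it bypasses the bookkeeping you anticipated about realising $N_L$ as a reflection complex. (Incidentally, frames of reflection complexes have no isolated vertices, by $2$-reducibility, so your caveat there is harmless but not needed.)
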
 

\begin{remark} Potentially we could replace thickness with the seemingly weaker condition that $s_H(V)\in W_B+Q_V$. Let us call such graphs {\bf weakly thick}. This notion is more in the spirit of theorem \ref{general} and it would still imply Sidorenko's conjecture for $H$. However we don't know any graph that is weakly thick but not thick. Secondly, thickness behaves better with respect to certain operations than weak thickness. The notion of weak thickness will be used later when we work with hypergraphs. 
\end{remark}

\begin{remark} In theorem \ref{general} there is a term responsible for isomorphic pairs of subsets. This notion can also be interpreted for reflection complexes and could lead to a more general sufficient condition for Sidorenko's conjecture. We don't know any concrete example where this seemingly useful term helps. 
\end{remark}

Despite of the fact that the definition of thickness uses linear algebra we will introduce combinatorial operations that preserve this property. These operations will help us to prove the property for large classes of graphs. The next lemma follows directly from the definition of $h_H(X)$. 

\begin{lemma}\label{hincl} Let $H=(V,E)$ be a graph. Then $$h_H(A_1\cup A_2)=h_H(A_1)+h_H(A_2)-h_H(A_1\cap A_2)-t_{A_1,A_2}$$ holds for every pair $A_1,A_2\subseteq V$.
\end{lemma}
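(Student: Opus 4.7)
The plan is to verify the identity by a direct coordinate-wise expansion in $\mathbb{R}^{2^V}$. Since the expression $\Delta(A_1,A_2):=h_H(A_1\cup A_2)-h_H(A_1)-h_H(A_2)+h_H(A_1\cap A_2)$ is linear in $h_H$, I would split it along the three summands in the definition of $h_H(X)$ — namely $-1_X$, the edge sum $\sum_{e\in E(F_X)}1_e$, and the degree correction $-\sum_{v\in X}1_v(\deg_{F_X}(v)-1)$ — and treat each summand separately.

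First I would handle the $-1_X$ piece: applying $\Delta$ gives $-1_{A_1\cup A_2}+1_{A_1}+1_{A_2}-1_{A_1\cap A_2}$, which by the definition of $t_{A,B}$ is exactly $-t_{A_1,A_2}$, matching the tail on the right-hand side of the claimed identity. It then remains to show that $\Delta$ vanishes on the remaining two pieces of $h_H$.

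Next I would combine the edge-indicator sum and the degree correction by rewriting them in the symmetric form
$$\sum_{e=\{u,w\}\in E(F_X)}(1_e-1_u-1_w)+\sum_{v\in X}1_v,$$
using that $\sum_{v\in X}1_v\deg_{F_X}(v)=\sum_{e=\{u,w\}\in E(F_X)}(1_u+1_w)$. The single-vertex sum $\sum_{v\in X}1_v$ is manifestly modular in $X$ — at each coordinate $\{v\}$ it is just the inclusion–exclusion identity for $[v\in X]$ — so it drops out of $\Delta$. For the edge-indexed piece, each edge $e=\{u,w\}\in E(H)$ contributes $(1_e-1_u-1_w)$ weighted by the alternating indicator $[u,w\in A_1\cup A_2]-[u,w\in A_1]-[u,w\in A_2]+[u,w\in A_1\cap A_2]$, and I would factorise $[u,w\in X]=[u\in X]\cdot[w\in X]$ to reduce this alternating sum to a product of two vertex-level inclusion–exclusion identities, each of which vanishes when $\{u,w\}$ sits entirely inside or entirely outside the symmetric difference.

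The main obstacle lies precisely at this factorisation step: for a ``cross edge'' with $u\in A_1\setminus A_2$ and $w\in A_2\setminus A_1$, the product $[u\in X][w\in X]$ vanishes on $A_1$, $A_2$, and $A_1\cap A_2$ but equals $1$ on $A_1\cup A_2$, so its alternating sum is $+1$ rather than $0$. The delicate part of the argument is to organise the edge-sum and degree-correction contributions so that the would-be residual $(1_e-1_u-1_w)$ cancels cleanly against the rest (and to verify that the preceding definitions of $F_X$ and $\deg_{F_X}$ are being read in a way that makes this cancellation honest); the rest of the proof is routine bookkeeping that follows immediately from the splitting above.
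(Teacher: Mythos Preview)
Your decomposition is entirely sound, and your computation up through the cross-edge obstruction is correct. The gap is not in your bookkeeping but in the lemma itself: the cancellation you are hoping for at the end does not occur. For a cross edge $e=\{u,w\}$ with $u\in A_1\setminus A_2$ and $w\in A_2\setminus A_1$, the residual $1_e-1_u-1_w$ genuinely survives in $h_H(A_1\cup A_2)-h_H(A_1)-h_H(A_2)+h_H(A_1\cap A_2)+t_{A_1,A_2}$; you have already used up all three pieces of $h_H$, so there is nothing left to absorb it. A two-vertex check makes this concrete: take $V=\{1,2\}$, $E=\{\{1,2\}\}$, $A_1=\{1\}$, $A_2=\{2\}$. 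Then $h_H(V)=h_H(\{1\})=h_H(\{2\})=0$ and $h_H(\emptyset)=-1_\emptyset$, while $t_{A_1,A_2}=1_{\{1,2\}}-1_{\{1\}}-1_{\{2\}}+1_\emptyset$, so the right-hand side of the claimed identity equals $-1_{\{1,2\}}+1_{\{1\}}+1_{\{2\}}\neq 0$.

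The paper's own proof is simply the remark that the identity ``follows directly from the definition of $h_H(X)$,'' which is correct once one adds the tacit hypothesis $E(A_1\cup A_2)=E(A_1)\cup E(A_2)$ (no edge of $H$ joins $A_1\setminus A_2$ to $A_2\setminus A_1$). Under that hypothesis your alternating indicator sum $[u,w\in A_1\cup A_2]-[u,w\in A_1]-[u,w\in A_2]+[u,w\in A_1\cap A_2]$ vanishes for every edge of $H$, and your argument goes through verbatim. Every application of the lemma in the paper (in Lemma~\ref{forcon} the decomposition explicitly has $E=E(V_1)\cup E(V_2)$; in Lemma~\ref{genforgl} the frame of $M_1\cup^*_{\phi_1,\phi_2}M_2$ is $\tau_1(\mathcal{F}(M_1))\cup\tau_2(\mathcal{F}(M_2))$ by Lemma~\ref{frun}) is in exactly this no-cross-edge situation, so nothing downstream is affected. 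But as literally stated for arbitrary $A_1,A_2\subseteq V$ the identity is false, and you should stop searching for a hidden cancellation.
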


We will also need the following two lemmas.

\begin{lemma}\label{forcon} Let $H=(V,E)$ be a forest. Then $-h_H(V)\in Q_V$. 
\end{lemma}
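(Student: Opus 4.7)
The plan is induction on $|V|$. The base cases $|V|\le 1$ are immediate: a direct computation gives $h_H(\emptyset)=-1_\emptyset$ and $h_H(\{v\})=0$ for a single vertex (the degree correction $-1_{\{v\}}(0-1)$ cancels the $-1_{\{v\}}$), so $-h_H(V)\in Q_V$ in both cases.

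For the inductive step I peel off a single vertex via Lemma~\ref{hincl}. If $H$ has at least one edge, I choose $v$ to be a leaf with unique neighbor $u$, let $H'$ be the forest $H\setminus\{v\}$, and apply the identity with $A_1=V\setminus\{v\}$ and $A_2=\{u,v\}$, so that $A_1\cap A_2=\{u\}$. The induced subgraph of $H$ on $V\setminus\{v\}$ coincides with $H'$, and the degrees match (deleting the leaf drops $u$'s degree by exactly one), so $h_H(A_1)=h_{H'}(V\setminus\{v\})$; the other two restrictions give $h_H(\{u,v\})=h_H(\{u\})=0$ by direct calculation. Lemma~\ref{hincl} then collapses to $-h_H(V) = -h_{H'}(V\setminus\{v\}) + t_{A_1,A_2}$, in which the first term lies in $Q_V$ by induction and the second is a listed generator of the cone part of $Q_V$.

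If $H$ has no edges, I split instead with $A_1=V\setminus\{v\}$ and $A_2=\{v\}$, so $A_1\cap A_2=\emptyset$. Here $h_H(\emptyset)=-1_\emptyset$ contributes an extra constant, and a short rearrangement of Lemma~\ref{hincl} yields $-h_H(V) = -h_{H'}(V\setminus\{v\}) + t_{A_1,A_2} - 1_\emptyset$; this still lies in $Q_V$ because $Q_V$ contains the full one-dimensional subspace $\langle 1_\emptyset\rangle_{\mathbb{R}}$. The only nontrivial bookkeeping point, and the one place I expect to have to be careful, is verifying that $h_H$ restricted to a subset agrees with the corresponding $h$ on the induced subforest (so that degrees are indeed recomputed in the smaller graph) and tracking the stray $1_\emptyset$ term when the intersection is empty; once those two checks are in hand, the induction step is a one-line calculation.
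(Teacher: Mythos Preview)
Your argument is correct and follows essentially the same route as the paper: induction on $|V|$, using Lemma~\ref{hincl} to split $V$ into two pieces meeting in at most one vertex, with the $1_\emptyset$ term absorbed by the linear part of $Q_V$ when the intersection is empty. The paper phrases the split abstractly (any $V=V_1\cup V_2$ with $|V_1\cap V_2|\le 1$ and $E=E(V_1)\cup E(V_2)$), while you always peel off a single leaf or isolated vertex; your choice is just a specific instance of the paper's decomposition, and the computations are identical.
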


\begin{proof} We prove the statement by induction on $|V|$. If $|V|=1$ or $|V|=2,|E|=1$ then $h_H(V)=0$. In every other case there is a decomposition $V=V_1\cup V_2$ such that $|V_1|<|V|~,~|V_2|<|V|~,~|V_1\cap V_2|\leq 1$ and $E=E(V_1)\cup E(V_2)$. We have by lemma \ref{hincl} that $-h_H(V)$ is equal to $-h_H(V_1)-h_H(V_2)+h_H(V_1\cap V_2)+t(V_1,V_2)$. The first two terms are in $Q_V$ by induction. The last term is in $Q_V$ by definition. The term  $h_V(V_1\cap V_2)$ is $0$ if $|V_1\cap V_2|=1$ and is $1_\emptyset\in Q_V$ if $V_1\cap V_2=\emptyset$.
\end{proof}

\begin{lemma}\label{genforgl} Assume that $M_1=(V_1,E_1,B_1)$ and $M_2=(V_2,E_2,B_2)$ are two thick $b$-hypergraphs and $\phi_1:F\rightarrow V_1, \phi_2: F\rightarrow V_2$ are injective maps for some set $F$. Assume that $\phi_2\circ\phi_1^{-1}$ is an isomorphism between the graph $H$ spanned on $\phi_1(F)$ in $\mathcal{F}(M_1)$ and the graph on $\phi_2(F)$ in $\mathcal{F}(M_2)$. Assume furthermore that $H$ is a forest. Then $M=(V,E,B)=M_1\cup^*_{\phi_1,\phi_2} M_2$ is also thick. 
\end{lemma}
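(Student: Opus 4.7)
The plan is to decompose $h_{\mathcal{F}(M)}(V)$ along the decomposition $V = A_1 \cup A_2$, where $A_i := \tau_i(V_i)$, and then identify each piece with data coming from $M_1$, $M_2$, or the forest $H$. Let me write $F^\ast := A_1 \cap A_2 = \tau_1(\phi_1(F)) = \tau_2(\phi_2(F))$, and abbreviate $H_M := \mathcal{F}(M)$, $H_i := \mathcal{F}(M_i)$. Applying Lemma \ref{hincl} directly yields
$$h_{H_M}(V) = h_{H_M}(A_1) + h_{H_M}(A_2) - h_{H_M}(F^\ast) - t_{A_1,A_2}.$$
Each of the four summands will be placed into $W_B + Q_V$ separately.

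The first combinatorial step is to check that the size-two edges of $M$ contained in $A_1$ (respectively $A_2$, respectively $F^\ast$) are exactly the $\tau_1$-images of the size-two edges of $M_1$ (respectively the $\tau_2$-images from $M_2$, respectively the $\phi$-images of the edges of the forest $H$). This is where the construction $\cup^\ast$ has to be scrutinised: any edge of the second type has the form $\tau_1(K_1) \cup \tau_2(K_2)$ with $\phi_j(F) \subseteq K_j$; for such an edge to be contained in $A_1$ one needs $\tau_2(K_2) \subseteq A_1$, forcing $K_2 = \phi_2(F)$ and reducing the edge to $\tau_1(K_1)$, which was already a first-type edge coming from $M_1$. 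The same check on $F^\ast$ uses the assumption that $\phi_2 \circ \phi_1^{-1}$ is a graph isomorphism, so that the frame of $M$ restricted to $F^\ast$ agrees with $H$ through the identification $\phi$. Consequently $h_{H_M}(A_i)$ is the pushforward of $h_{H_i}(V_i)$ under $\tau_i$, and $h_{H_M}(F^\ast)$ is the pushforward of $h_H(F)$ under $\phi$.

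With these identifications, each term is handled by invoking the appropriate earlier result. Thickness of $M_i$ gives $h_{H_i}(V_i) \in W_{B_i} + Q_{V_i}$, which transports under $\tau_i$ to $W_B + Q_V$ since $\tau_i(B_i) \subseteq B$ and $Q_{V_i}$ embeds naturally into $Q_V$. Lemma \ref{forcon} applied to the forest $H$ yields $-h_H(F) \in Q_F$, hence $-h_{H_M}(F^\ast) \in Q_V$. Finally, taking $K_1 = V_1 \in E_1$ and $K_2 = V_2 \in E_2$ in the $\cup^\ast$ construction shows $(A_1, A_2) \in B$, so $t_{A_1,A_2} \in W_B$ and, since $W_B$ is a linear subspace, $-t_{A_1,A_2} \in W_B$ as well. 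Summing the four contributions proves $h_{H_M}(V) \in W_B + Q_V$, establishing thickness of $M$.

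I expect the main obstacle to be the second paragraph: the proof hinges on the fact that the added ``second type'' edges cannot create stray size-two edges inside $A_i$ or $F^\ast$ that are not already accounted for by $H_i$ or $H$. Everything else is a straightforward assembly of Lemma \ref{hincl}, Lemma \ref{forcon}, the thickness hypotheses on $M_1$ and $M_2$, and the observation that the relation $(A_1, A_2)$ is present in $B$ by construction (using $V_i \in E_i$, as is automatic when the $M_i$ arise from reflection complexes).
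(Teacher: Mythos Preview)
Your proposal is correct and follows essentially the same approach as the paper: decompose $h_{H_M}(V)$ via Lemma \ref{hincl}, then place $h_{H_M}(A_i)$ into $W_B+Q_V$ via thickness of $M_i$, $-h_{H_M}(F^\ast)$ into $Q_V$ via Lemma \ref{forcon}, and $-t_{A_1,A_2}$ into $W_B$ via $(A_1,A_2)\in B$. Your second paragraph, verifying that the frame of $M$ restricted to $A_i$ coincides with $\tau_i(H_i)$ and that no second-type edges contribute stray size-two edges, is a level of care the paper omits (it implicitly relies on the reflection-complex context where all edges have size $\ge 2$); likewise your observation that $(A_1,A_2)\in B$ needs $V_i\in E_i$ is a hypothesis the paper's proof uses without stating.
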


\begin{proof} Let $A_1=\tau_1(V_1)$ and $A_2=\tau_2(V_2)$ using the notation from the definition of $\cup^*$. Let $H=\mathcal{F}(M)$. 
Using that $t_{A_1,A_2}\in W_B$ we have by lemma \ref{hincl} and lemma \ref{forcon} that $h_H(V)\in h_H(V_1)+h_H(V_2)+Q_V+W_B$.  Using the assumption that $M_1$ and $M_2$ are thick we obtain that $h_H(V_1)+h_H(V_2)\in W_B+Q_V$ and thus $h_H(V)\in Q_V+W_B$.
\end{proof}

\begin{definition} Let us denote the set of reflection complexes $M$ by $\mathcal{C}$ that can be obtained from the trivial reflection complex by a sequence of operations of type $N\mapsto r_{L,X}(N)$ where $X$ spans a forest in $\mathcal{F}(N)$. Let us denote the set of graphs that are frames of some reflection complex in $\mathcal{C}$ by $\mathcal{R}$. 
\end{definition}

\begin{theorem}\label{genforgl2} Let $M=(V,E,B)$ be a reflection complex in $\mathcal{C}$. Then every edge in $M$ spans a thick $b$-hypergraph. In particular $M$ is a thick reflection complex. 
\end{theorem}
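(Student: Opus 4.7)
The plan is to induct on the length $n$ of a construction sequence $M_0,M_1,\dots,M_n=M$ witnessing $M\in\mathcal{C}$, where $M_0$ is the trivial reflection complex, $M_i=r_{L_{i-1},X_{i-1}}(M_{i-1})$, and $X_{i-1}$ spans a forest in $\mathcal{F}(M_{i-1})$. The inductive claim I would carry is the strong one: every edge of $M_i$ spans a thick $b$-hypergraph. The ``in particular'' clause is then free, because the full vertex set of a reflection complex is itself an edge (as noted after Definition~\ref{refcomp}), so the edgewise statement applied to $V(M)\in E(M)$ gives thickness of $M$. The base case $n=0$ is a direct computation: the trivial complex has the single edge $\{1,2\}$, both frame-degrees equal to $1$, and empty relation set, so $h_H(V)=-1_{\{1,2\}}+1_{\{1,2\}}-0=0\in W_B+Q_V$.

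For the inductive step set $M'=M_{n-1}$ and write $M=r_{L,X}(M')=M'\cup^*_{\phi_1,\phi_2}N$, where $N$ is the sub-$b$-hypergraph of $M'$ on $L$ and $\phi_1,\phi_2$ are the identity inclusions of $X$ into $V(M')$ and into $L$. Let $K\in E(M)$ be an arbitrary edge. If $K=\tau_1(K_1)$ is of first type (the $\tau_2$ case being symmetric), I would first observe that any second-type edge $\tau_1(K_1')\cup\tau_2(K_2')$ sitting inside $\tau_1(K_1)$ must have $\tau_2(K_2')\subseteq\tau_1(V(M'))\cap\tau_2(L)=\tau_1(X)$, hence $K_2'\subseteq X$, which together with $X\subseteq K_2'$ forces $K_2'=X$ and collapses the edge to $\tau_1(K_1')$, already of first type. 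Consequently the edge set --- and by the Lemma~\ref{frun}-style observation the frame --- of the sub-$b$-hypergraph of $M$ on $\tau_1(K_1)$ is exactly the $\tau_1$-image of the sub-$b$-hypergraph of $M'$ on $K_1$, while the relation set can only have grown (possibly absorbing new second-type relations, which only enlarges $W_B$). The induction hypothesis applied to $K_1\in E(M')$ thus gives $h_H(V)\in W_B+Q_V$ for $K$.

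If instead $K=\tau_1(K_1)\cup\tau_2(K_2)$ is of second type, with $X\subseteq K_1\in E(M')$ and $X\subseteq K_2\in E(N)$, then the same bookkeeping identifies the sub-$b$-hypergraph of $M$ on $K$ with $M_1\cup^*_{\phi_1,\phi_2}M_2$, where $M_i$ is the sub-$b$-hypergraph of $M'$ on $K_i$. Both $M_i$ are thick by the induction hypothesis; $X$ spans a forest in $\mathcal{F}(M')$ and therefore also in $\mathcal{F}(M_1)$ and $\mathcal{F}(M_2)$, which are induced subframes of $\mathcal{F}(M')$ both containing $X$; and the identity provides the required isomorphism of those forests. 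Lemma~\ref{genforgl} now yields thickness of $K$.

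I expect the main obstacle to be precisely this identification in the second case: checking that the edges and second-type relations of $M$ which happen to lie inside $K=\tau_1(K_1)\cup\tau_2(K_2)$ are exactly the data produced by the abstract gluing $M_1\cup^*_{\phi_1,\phi_2}M_2$. The essential bookkeeping is that for $(K_1',K_2')\in E(M')\times E(N)$ with $X\subseteq K_1'\cap K_2'$, the edge $\tau_1(K_1')\cup\tau_2(K_2')$ lies in $K$ iff $K_1'\subseteq K_1$ and $K_2'\subseteq K_2$, and similarly for the corresponding second-type relations. Once this is nailed down, what remains is a mechanical application of Lemma~\ref{genforgl} together with the induction hypothesis.
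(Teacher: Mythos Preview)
Your proof is correct and follows the same approach as the paper's own argument: induction along the construction sequence (the paper phrases it as induction on $|V|$, but this amounts to the same thing), with first-type edges inherited directly from the induction hypothesis and second-type edges handled by Lemma~\ref{genforgl}. Your write-up is considerably more explicit than the paper's: where the paper says the first-type case is ``trivial'' and that Lemma~\ref{genforgl} ``implies'' the second-type case, you actually verify that the sub-$b$-hypergraph of $M$ spanned on $K$ coincides (up to possibly a larger relation set in the first-type case, which only enlarges $W_B$) with the object to which the induction hypothesis or Lemma~\ref{genforgl} applies; this bookkeeping is exactly what is needed and your sketch of it is accurate.
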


\begin{proof} The theorem is true for the trivial reflection complex. We go by induction on $|V|$. Assume that $M=(V,E,B)$ is a reflection complex in $\mathcal{C}$ and $N=r_{L,X}(M)$ for some $L\in E$, $X\subseteq L$ such that $X$ spans a forest in $\mathcal{F}(M)$.
The statement is trivial for edges in $N$ of the first type. If $K=\tau_1(K_1)\cup\tau_2(K_2)$ is an edge of the second type then lemma \ref{genforgl} implies that $K$ spans a thick $b$-hypergraph.
\end{proof}

\begin{corollary} Every graph in $\mathcal{R}$ is thick. 
\end{corollary}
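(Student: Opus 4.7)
The plan is to observe that this corollary is essentially immediate from Theorem \ref{genforgl2}, so no genuinely new work is required. By the definition of $\mathcal{R}$, a graph $H$ belongs to $\mathcal{R}$ exactly when $H=\mathcal{F}(M)$ for some reflection complex $M\in\mathcal{C}$. Theorem \ref{genforgl2} already asserts that every such $M$ is a thick reflection complex (indeed, its full vertex set is an edge, and that edge spans a thick sub-$b$-hypergraph, which is $M$ itself). On the other hand, Definition \ref{defthick} defines a thick graph to be the frame of some thick reflection complex. Putting these two facts together, $H=\mathcal{F}(M)$ is thick, which is exactly the statement of the corollary.

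So the proof I would write is a single sentence invoking Theorem \ref{genforgl2} together with Definition \ref{defthick}. There is no obstacle at this step; the real content sits upstream, in the induction carried out in Theorem \ref{genforgl2}. That induction, in turn, reduces the inductive step to Lemma \ref{genforgl}, which handles the gluing $M_1\cup^{*}_{\phi_1,\phi_2}M_2$ along a forest using Lemma \ref{hincl} (inclusion-exclusion for $h_H$) and Lemma \ref{forcon} (that $-h_H(V)\in Q_V$ for a forest $H$). All of those are already established by the time we reach the corollary, so the statement follows at once.
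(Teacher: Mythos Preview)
Your proposal is correct and matches the paper's approach: the corollary is stated immediately after Theorem \ref{genforgl2} with no separate proof, precisely because it follows at once from that theorem together with Definition \ref{defthick}, exactly as you explain.
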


\section{Subdivisions of graphs and reflection complexes}

Let $H_1=(V_1,E_1)$ and $H_2=(V_2,E_2)$ be bipartite graphs with bipartition $m:V_2\rightarrow\{1,2\}$ of $H_2$. Assume furthermore that $J_1,J_2$ are two disjoint subsets in $V_1$. Then we define the $\{H_1,J_1,J_2\}$ subdivision of $H_2$ in the following way. We blow up every point $v$ of $V_2$ into a copy of the set $J_{m(v)}$ (all disjoint) called $J'_v$ and we replace every edge $(v_1,v_2)\in E_2$ by a copy of $H_1$ such that $J_1$ is glued on $J'_{v_1}$ and $J_2$ is glued on $J'_{v_2}$ using the natural bijection. Note that if $H_2$ is connected then there are two bipartitions of $V_2$ and thus there are two subdivisions. The main theorem of this chapter is the following.

\begin{theorem}\label{subdiv} Let $H_1=(V_1,E_1)$ and $H_2=(V_2,E_2)$ be thick graphs and assume that $J_1,J_2$ are disjoint subsets in $V_1$ such that both $J_1,J_2$ span a forest in $H_1$. Then the $\{H_1,J_1,J_2\}$ subdivision of $H_2$ is also a thick graph. 
\end{theorem}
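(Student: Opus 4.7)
The plan is to mirror the reflection-complex construction of $M_2$ on the ``blown-up'' side. Let $M_1$ and $M_2$ be thick reflection complexes with frames $H_1$ and $H_2$, and let $M_2^{(0)},M_2^{(1)},\ldots,M_2^{(n)}=M_2$ be a construction sequence with $M_2^{(i+1)} = r_{L_i, X_i}(M_2^{(i)})$ starting from the trivial $M_2^{(0)}$. For $Y \subseteq V(M_2^{(i)})$, define the \emph{blow-up} $\tilde Y$ as the union of the $J'_v$-blocks for $v \in Y$ together with the interior vertices of every $H_1$-copy corresponding to an edge of $\mathcal{F}(M_2^{(i)})$ lying inside $Y$. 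I would prove by induction on $i$ that there exists a reflection complex $\tilde M^{(i)}$ such that
\begin{enumerate}
\item the frame of $\tilde M^{(i)}$ is the $\{H_1,J_1,J_2\}$-subdivision of $\mathcal{F}(M_2^{(i)})$;
\item for every edge $L \in E(M_2^{(i)})$, the blow-up $\tilde L$ is an edge of $\tilde M^{(i)}$ whose sub-$b$-hypergraph is thick.
\end{enumerate}
Taking $L = V(M_2)$ at $i=n$ then yields the theorem. The base case is $\tilde M^{(0)}=M_1$.

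For the inductive step I would set $\tilde M^{(i+1)} = r_{\tilde L_i,\tilde X_i}(\tilde M^{(i)})$; property (2) of the inductive hypothesis makes $\tilde L_i$ a valid reflection edge, and Lemma \ref{frun} combined with a direct bookkeeping check gives (1) at stage $i+1$ (gluing along $\tilde X_i$ identifies precisely the $H_1$-copies for edges inside $X_i$ while doubling those meeting $L_i\setminus X_i$). Edges of the first type in (2) inherit thickness from the inductive hypothesis. The real content lies in new edges of the second type $\tilde K = \tau_1(\tilde K_1)\cup\tau_2(\tilde K_2)$; for those Lemma \ref{hincl} gives
\begin{equation*}
h_{\tilde H}(\tilde K) = h_{\tilde H}(\tau_1\tilde K_1) + h_{\tilde H}(\tau_2\tilde K_2) - h_{\tilde H}(\tau_1\tilde K_1\cap\tau_2\tilde K_2) - t_{\tau_1\tilde K_1,\tau_2\tilde K_2},
\end{equation*}
where the first two terms lie in $W_B+Q_V$ by the inductive hypothesis and $t_{\tau_1\tilde K_1,\tau_2\tilde K_2}\in W_B$ because $\cup^*$ adds exactly this pair to $B$. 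What remains is to place $-h_{\tilde H}$ on the intersection into $W_B+Q_V$.

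The intersection is itself a blow-up, built from $H_1$-blocks (one per edge of $\mathcal{F}(M_2^{(i)})$ inside $K_1\cap K_2$) glued along $J'_v$-blocks. Iterating Lemma \ref{hincl} over these $H_1$-blocks, the per-block $h$-contributions are absorbed using thickness of $H_1$ and the relations already carried by each $M_1$-copy, while the pairwise block intersections are disjoint unions of $J'_v$-blocks, which span forests because $J_1$ and $J_2$ each span a forest in $H_1$; Lemma \ref{forcon} then supplies the required sign. The main obstacle I expect is precisely this sign-tracking: the blow-up intersection appears with a minus sign in Lemma \ref{hincl}, so one needs an auxiliary ``co-thickness'' control of the form $-h_{\tilde H}\in W_B+Q_V$ on these intersections, rather than just the usual thickness. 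Both hypotheses of the theorem enter essentially at this point --- $H_1$ being thick supplies the positive block contributions, and the hypothesis that $J_1$ and $J_2$ \emph{each} span a forest (strictly weaker than $J_1\cup J_2$ being a forest) ensures that the pairwise block intersections are themselves forests and therefore controlled by Lemma \ref{forcon}.
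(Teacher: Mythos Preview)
Your construction of the blown-up sequence $\tilde M^{(i)}=r_{\tilde L_{i-1},\tilde X_{i-1}}(\tilde M^{(i-1)})$ starting from $\tilde M^{(0)}=M_1$ is exactly the paper's construction (there denoted $\hat M_i$), so structurally you are on the right track. The divergence---and the gap---is in how you extract thickness from it.

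You try to mimic Theorem~\ref{genforgl2}: prove inductively that \emph{every} blown-up edge $\tilde L$ spans a thick sub-$b$-hypergraph. But notice that your argument never invokes the hypothesis that $H_2$ is thick; you only use that $M_2$ is a reflection complex. If your scheme worked it would prove the subdivision theorem for arbitrary reflection-complex frames $H_2$, not just thick ones, and there is no reason to expect that. Concretely, the ``co-thickness'' obstacle you flag is real and does not dissolve: for a second-type edge you need $-h_{\tilde H}(\tilde X_i)\in W_B+Q_V$, and decomposing $\tilde X_i$ into $H_1$-blocks via Lemma~\ref{hincl} produces $-\sum h_{\tilde H}(B_j)+\sum h_{\tilde H}(J'_v)+\sum t$. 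Thickness of $H_1$ controls $+h_{\tilde H}(B_j)$, not $-h_{\tilde H}(B_j)$, and Lemma~\ref{forcon} controls $-h_{\tilde H}(J'_v)$, not $+h_{\tilde H}(J'_v)$. Both signs are wrong; there is no free absorption here. This is precisely the place where the thickness of $H_2$ must enter, and your inductive statement leaves no room for it.

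The paper avoids the edge-by-edge induction entirely. It introduces the linear map $\gamma^*$ on set functions, $\gamma^*(1_A):=1_{\gamma_n(A)}$, checks that $\gamma^*(W_{B_n})\subseteq W_{\hat B_n}$ and $\gamma^*(Q_{V_n})\subseteq Q_{\hat V_n}$, and then uses the thickness of $M_2$ directly: since $h:=h_{\mathcal F(M_2)}(V_2)\in W_{B_n}+Q_{V_n}$, one gets $\gamma^*(h)\in W_{\hat B_n}+Q_{\hat V_n}$. The proof finishes with the single identity
\[
h_{\tilde H}(\hat V_n)=\gamma^*(h)+\sum_{A\in\mathcal F(M_2)}h_{\tilde H}(\gamma_n(A))+\sum_{v\in V_2}(1-\deg v)\,h_{\tilde H}(\gamma_n(\{v\})),
\]
where the middle sum is handled by thickness of $H_1$ (one copy per edge of $H_2$, with the correct $+$ sign) and the last sum by Lemma~\ref{forcon} (each $\gamma_n(\{v\})$ is a $J_1$- or $J_2$-block, hence a forest, and $1-\deg v\le 0$ gives the correct $-$ sign). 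The thickness of $H_2$ is what makes $\gamma^*(h)$ land in the right cone; that is the missing ingredient in your approach.
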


The following product notion for graphs was studied in \cite{KLL}. For two graphs $H_1=(V_1,E_1),H_2=(V_2,E_2)$ the vertex set of $H_1\square H_2$ is $V_1\times V_2$. Two vertices $(v_1,v_2),(w_1,w_2)$ are connected if either $v_1=w_1,~(v_2,w_2)\in E_2$ or $v_2=w_2,~(v_1,w_1)\in E_1$. Let $e=(V_e,E_e)$ be the single edge with $V_e=\{1,2\},E_e=\{\{1,2\}\}$. It is clear that $H_1\square H_2$ is a subdivision of $H_2$ with $H_1\square e$ such that $J_1=V_1\times\{1\}$ and $J_2=V_1\times\{2\}$. It is easy to see (and we will show it in the chapter of examples) that if $H_1$ is a tree then $H_1\square e$ is in the class $\mathcal{R}$ and so it is a thick graph. Using this we obtain the following corollary of of theorem \ref{subdiv}.

\begin{corollary}\label{product} If $H_1$ and $H_2$ are two graphs such that $H_1$ is a tree and $H_2$ is thick then $H_1\square H_2$ is thick.
\end{corollary}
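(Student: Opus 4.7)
The plan is to invoke Theorem \ref{subdiv} by recognizing $H_1\square H_2$ as a specific subdivision of $H_2$. First I would identify $H_1\square H_2$ with the $\{H_1\square e,J_1,J_2\}$-subdivision of $H_2$, taking $J_1=V_1\times\{1\}$ and $J_2=V_1\times\{2\}$. Unpacking the subdivision construction: grouping vertices of $H_1\square H_2$ by their second coordinate gives, for each $v\in V_2$, a slice $V_1\times\{v\}$ which plays the role of the blow-up of $v$ into the appropriate copy of $J_1$ or $J_2$ dictated by the bipartition of $H_2$, and each edge $(v_1,v_2)\in E_2$ contributes precisely the edges of a copy of $H_1\square e$ glued between the slices $V_1\times\{v_1\}$ and $V_1\times\{v_2\}$. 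These are the two internal copies of $H_1$ (one per slice) together with the rung edges $((u,v_1),(u,v_2))$ for $u\in V_1$. This matches the output of the subdivision procedure exactly.

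Next I would verify the hypotheses of Theorem \ref{subdiv} for this identification. The graph $H_2$ is thick by assumption. The graph $H_1\square e$ is thick because, as the text notes immediately before stating the corollary, $H_1\square e$ lies in the class $\mathcal{R}$ whenever $H_1$ is a tree, and every element of $\mathcal{R}$ is thick by the corollary to Theorem \ref{genforgl2}. The sets $J_1$ and $J_2$ are plainly disjoint, and each of them spans a subgraph of $H_1\square e$ isomorphic to $H_1$, hence a tree, hence in particular a forest. All hypotheses of Theorem \ref{subdiv} are met, so its conclusion delivers thickness of the subdivision, that is, thickness of $H_1\square H_2$.

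The main obstacle in this route is the sub-claim that $H_1\square e\in\mathcal{R}$ when $H_1$ is a tree. My plan for that would be induction on $|V(H_1)|$: start from the single edge $e$ itself, which is the trivial reflection complex (and corresponds to the one-vertex $H_1$ case), and for each leaf added to $H_1$ perform an operation $r_{L,X}$ where $L$ is the existing rung-edge of the current $H_1'\square e$ to which the new leaf is adjacent and $X$ is the single vertex along which the new rung is attached. Because $|X|=1$, the set $X$ trivially spans a forest, so the construction stays inside $\mathcal{C}$. This is where the tree hypothesis on $H_1$ is essential, and since the paper defers the detailed bookkeeping of this construction to its examples chapter, I would simply cite it at that point rather than reprove it here.
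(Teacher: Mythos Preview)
Your main argument is exactly the paper's: recognize $H_1\square H_2$ as the $\{H_1\square e,\,J_1,\,J_2\}$-subdivision of $H_2$ with $J_i=V_1\times\{i\}$, observe that $H_1\square e\in\mathcal{R}$ is thick and that $J_1,J_2$ span copies of the tree $H_1$, and invoke Theorem~\ref{subdiv}. Nothing to change there.

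One caveat on your sketched induction for $H_1\square e\in\mathcal{R}$: the step you describe does not do what you want. Taking $L$ to be a rung $\{(v,1),(v,2)\}$ and $X$ a single vertex, the operation $r_{L,X}$ reflects only the $2$-element edge $L$; its frame contribution is a single new pendant edge, not a new rung together with the two connecting edges. The workable inductive step uses a larger hyperedge: in the complex already built for $H_1'\square e$ one maintains, for each edge $(v,w)$ of $H_1'$, a $4$-element hyperedge $L=\{(v,1),(v,2),(w,1),(w,2)\}$ spanning a $C_4$, and then applies $r_{L,X}$ with $X=\{(v,1),(v,2)\}$, which spans a single edge (hence a forest). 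This attaches a fresh $C_4$ along that rung, producing the two new vertices and three new edges you need. Since you intend to cite this claim anyway, the overall proof is fine; just do not rely on the one-vertex-$X$ version if you ever spell it out.
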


It was proved in \cite{KLL} that the family of graphs satisfying Sidorenko's conjecture is closed with respect to taking $\square$-product with trees. Corollary \ref{product} says the same thing for thick graphs.  

The rest of this chapter is the proof of theorem \ref{subdiv}. As a preparation we need to extend the notion of subdivision to reflection complexes. Let $N=(V',E',B')$ be a reflection complex with two distinguished disjoint sets $J_1,J_2\subset V'$. If $M=(V,E,B)$ is some reflection complex built up from the trivial reflection complex $M_0$ with a sequence of operations $\{r_{L_i,X_i}\}_{i=0}^{n-1}$ then we can repeat essentially the same operations starting from $N$ instead of $M_0$ in a way that we use $J_1$ and $J_2$ instead of the points $1$ and $2$ in $M_0$. In the resulting reflection complex every point of $M$ will be blown up into either $J_1$ or $J_2$ and every edge in $\mathcal{F}(M)$ will be replace by a copy of $N$. Our first goal is to make this construction precise. 

We use the notation from definition \ref{refcomp}. Using the sequence $\{M_i\}_{i=0}^n$ we construct a new sequence $\{\hat{M}_i=(\hat{V_i},\hat{E_i},\hat{B_i})\}_{i=0}^n$ in a recursive way together with functions $\{\gamma_i:2^{V_i}\rightarrow 2^{\hat{V_i}}\}_{i=0}^n$ such that $\gamma_i$ maps edges of $M_i$ to edges of $\hat{M_i}$. For $i=0$ we set $\hat{V_0}=V',\hat{E_0}=E',\hat{B_0}=B'$ and $\gamma_0(1)=J_1,\gamma_0(2)=J_2,\gamma_0(\{1,2\})=\hat{V}_0,\gamma_0(\emptyset)=\emptyset$. Assume that $\hat{M}_{i-1},\gamma_{i-1}$ is already constructed. Then we set $\hat{L}_{i-1}:=\gamma_{i-1}(L_{i-1}),\hat{X}_{i-1}:=\gamma_{i-1}(X_{i-1})$ and $$\hat{M_i}:=r_{\hat{L}_{i-1},\hat{X}_{i-1}}(\hat M_{i-1}).$$ We define $\gamma_i$ in the following way. If $S\subset V_i$ is contained in one of $\tau_1(V_{i-1})$ or $\tau_2(L_{i-1})$ then $\gamma_i(S)$ is defined as the corresponding copy of $\gamma_{i-1}(S)$ in $\tau_1(\hat{V}_{i-1})$ or $\tau_2(\hat{L}_{i-1})$. The gluing guarantees the consistency of this definition for sets contained in both sets. For a general set $S\subseteq V_i$ we set $$\gamma_i(S):=\gamma_i(S\cap\tau_1(V_{i-1}))\cup\gamma_i(S\cap\tau_2(L_{i-1})).$$ We say that $\hat{M_n}$ is the subdivision of $M=M_n$ with $N$. Not that this definition depends on the way we build up $M$ however the frame $\mathcal{F}(\hat{M}_n)$ is always the subdivision of $\mathcal{F}(M)$ with $\{\mathcal{F}(N),J_1,J_2\}$.

Using the above notation we observe the following facts. The map $\gamma_i$ satisfies $\gamma_i(A_1)\cup\gamma_i(A_2)\subseteq \gamma_i(A_1\cup A_2)$ for $A_1,A_2\subseteq V_i$ and we have equality if $(A_1,A_2)\in B_i$. Furthermore $\gamma_i(A_1\cap A_2)=\gamma_i(A_1)\cap\gamma_i(A_2)$ holds for every pair $A_1,A_2\subseteq V_i$. These statements follow trivially by induction. Let us introduce the linear map $\gamma^*$ from the space of setfunctions on $V_n$ to the space of setfunctions on $\hat{V}_n$ defined as the linear extension of the map $\gamma^*(1_A):=1_{\gamma_n(A)}$. If $(A_1,A_2)\in B_n$ then $\gamma^*(t_{A_1,A_2})=t_{\gamma_n(A_1),\gamma_n(A_2)}$ and thus $\gamma^*(W_{B_n})\subseteq W_{\hat{B}_n}$. For two arbitrary sets $A_1,A_2\subseteq V_n$ let $C=\gamma_n(A_1\cup A_2)\setminus(\gamma_n(A_1)\cup\gamma_n(A_2))$. We have that $$\gamma^*(t_{A_1,A_2})=t_{\gamma_n(A_1),\gamma_n(A_2)}+t_{C,\gamma_n(A_1)\cup\gamma_n(A_2)}+1_C-1_\emptyset\in Q_{\hat{V}_n}.$$ This implies that $\gamma^*(Q_{V_n})\subseteq Q_{\hat{V}_n}$. 

Now we prove that if $M$ (recall that $M=M_n$) and $N$ are thick and $J_1,J_2$ span forests in $\mathcal{F}(N)$ then the reflection complex $\hat{M}_n$ is also thick. This completes the proof of theorem \ref{subdiv}. 

Let $P:= W_{\hat{B}_n}+Q_{\hat{V}_n}$ and let $H=\mathcal{F}(\hat{M}_n)$. By assumption we have that $h=h_{\mathcal{F}(M_n)}(V_n)\in W_{B_n}+Q_{V_n}$. Using the previous observations we get that $\gamma^*(h)\in P$. By the assumption that $N$ is thick we obtain that $h_H(\gamma_n(A))$ holds for every $A\in\mathcal{F}(M_n)$ . Furthermore, since $\gamma_n(\{v\})$ spans a forest in $H$ for every $v\in V_n$ we have by lemma \ref{forcon} that $-h_H(\gamma_n(\{v\})\in P$.  We obtain that $$h_H(\hat{V}_n)=\gamma^*(h)+\sum_{A\in\mathcal{F}(M_n)}h_H(\gamma_n(A))
+\sum_{v\in V}(1-\deg{v})h_H(\gamma_n(\{v\})$$ is in $P$.  This shows that $\hat{M}_n$ is thick.

\section{Reflection complexes and coupling structures}\label{rccs}

\begin{lemma}\label{umeasure} Let $M=(V,E,B)$ be a  $2$-reducible $b$-hypergraph and let $G$ be a finite graph with $|E(G)|>0$. Then there is at most one probability distribution  $\mu$ on $V(G)^M$ such that
\begin{enumerate}
\item If $(A_1, A_2)\in B$ for edges $A_1,A_2\in E$ then $\mu_{A_1}$ and $\mu_{A_2}$ are conditionally independent over $\mu_{A_1\cap A_2}$,
\item $\mu_A$ is a uniform random edge in $G$ for every $A\in E$ with $|A|=2$.
\end{enumerate}
\end{lemma}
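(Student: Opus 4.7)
The plan is to run an induction on $|T|$ showing that for every edge $T\in E$, the marginal $\mu_T$ is uniquely determined by conditions (1) and (2); applying this with $T=V$ (which is an edge by $2$-reducibility) then gives uniqueness of $\mu=\mu_V$ itself.

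For the base case $|T|=2$, condition (2) directly specifies $\mu_T$ as the distribution of a uniform random edge of $G$ labelled by $T$, so $\mu_T$ is uniquely pinned down. For the inductive step, I would take $|T|>2$ and assume uniqueness for all edges of smaller size. By $2$-reducibility one can choose a decomposition $T=A_1\cup A_2$ with $A_1,A_2\in E$, $(A_1,A_2)\in B$, and $\max(|A_1|,|A_2|)<|T|$. The inductive hypothesis gives unique $\mu_{A_1}$ and $\mu_{A_2}$; their common marginal $\mu_{A_1\cap A_2}$ (obtained for instance by restricting $\mu_{A_1}$) is therefore also unique. Condition (1) forces $\mu_T$ to be the conditionally independent coupling of $\mu_{A_1}$ and $\mu_{A_2}$ over this joint factor, so formula (\ref{inex0}) furnishes the explicit expression
$$\mu_T((x_v)_{v\in T})=\frac{\mu_{A_1}((x_v)_{v\in A_1})\,\mu_{A_2}((x_v)_{v\in A_2})}{\mu_{A_1\cap A_2}((x_v)_{v\in A_1\cap A_2})},$$
with the convention that the ratio is $0$ whenever the denominator vanishes. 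This determines $\mu_T$ uniquely and closes the induction.

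There is essentially no substantive obstacle here: the proof is a direct bookkeeping unpacking of the recursive structure provided by $2$-reducibility, and all of the analytic ingredients (in particular the conditionally independent coupling formula) have already been set up in Section~2. The only point that needs a sentence of care is that the marginals of $\mu_{A_1}$ and $\mu_{A_2}$ on $A_1\cap A_2$ really coincide so that the conditionally independent coupling is well defined; but this is automatic since both are restrictions of the same ambient measure $\mu$ on $V(G)^V$. Edges of size $\leq 1$ that happen to lie inside $V$ need no separate treatment because they only appear as intersections and are thus handled by marginalization from the inductively determined $\mu_{A_i}$.
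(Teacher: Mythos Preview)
Your argument is correct and is essentially the paper's proof: both use $2$-reducibility to decompose the top edge into smaller edges $A_1,A_2$ with $(A_1,A_2)\in B$, invoke induction to pin down $\mu_{A_1}$ and $\mu_{A_2}$, and conclude that $\mu_T$ is forced to be their conditionally independent coupling. The only cosmetic difference is that the paper inducts on $|V|$ (passing to the sub-$b$-hypergraph spanned on each edge, which is again $2$-reducible), whereas you induct on $|T|$ over all edges of the fixed $M$; your packaging is arguably slightly cleaner since it avoids explicitly invoking that sub-$b$-hypergraphs on edges remain $2$-reducible.
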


\begin{proof} We prove the statement by induction on $|V|$. If $|V|=2$ then the statement is trivial. Since every edge in  a $2$-reducible $b$-hypergraph spans a $2$-reducible $b$-hypergraph we can assume by induction that the statement is true for all edges in $M$ of size smaller that $|V|$. Let $V=A_1\cup A_2,~(A_1,A_2)\in B,~A_1,A_2\in E$ be a proper decomposition. Then any probability distribution $\mu$ satisfying the requirements is a conditionally independent coupling of $\mu_{A_1}$ and $\mu_{A_2}$ over $\mu_{A_1\cap A_2}$. Using that $\mu_{A_1}$ and $\mu_{A_2}$ are unique we have that $\mu$ (if there is such a measure at all) is unique.
\end{proof}

\begin{theorem}\label{refcoup} Let $N$ be a reflection complex with frame $H=\mathcal{F}(N)$. Then there is a unique coupling structure $f\in\mathfrak{A}$ such that
\begin{enumerate}
\item The frame of $f$ is $H$,
\item $f_e(G)$ is a uniform random edge in $G$ for every finite graph graph $G$ and edge $e$ in $H$,
\item every pair $(A_1,A_2)\in B(N)$ is a conditionally independent pair of $f$. 
\end{enumerate}
Furthermore we have that $f_A\in\mathfrak{A}$ holds for every edge $A$ in $N$.
\end{theorem}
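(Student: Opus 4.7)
The plan is to induct on the number of reflection operations used in the construction of $N$. In the base case $N$ is the trivial reflection complex, so $\mathcal{F}(N)$ is a single edge $e$ and $B(N)=\emptyset$; set $f(G):=\tau(e,G)$. All three conditions hold trivially, the ``furthermore'' clause holds for the sole edge $V$, and uniqueness is forced by condition (2).

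For the inductive step, write $N=r_{L,X}(M)$ and let $f^M\in\mathfrak{A}$ be the coupling structure supplied by the inductive hypothesis applied to the smaller reflection complex $M$. The ``furthermore'' clause of that hypothesis is the key input: it gives $f^M_L\in\mathfrak{A}$ for the distinguished edge $L\in E(M)$. Define
\[
f := C\bigl(f^M,\, f^M_L,\, \phi_1,\, \phi_2\bigr),
\]
where $\phi_1,\phi_2$ are the inclusions of $X$ into $V(M)$ and into $L$ used in the definition of $r_{L,X}$. The joint vertex factor is well defined because restriction is transitive and $X\subseteq L$, so $f^M$ and $f^M_L$ share the same $X$-marginal; hence $f\in\mathfrak{A}$ by closure of $\mathfrak{A}$ under conditionally independent couplings. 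The frame of $f$ agrees with $\mathcal{F}(N)$ by lemma \ref{frun}, condition (2) is inherited from $f^M$ since couplings preserve marginals, and condition (3) splits into three cases: pairs of the first type coming from $B(M)$ remain conditionally independent because the $\tau_1(V(M))$-marginal of $f$ is $f^M$; pairs of the first type coming from the $b$-structure on $L$ remain conditionally independent because the $\tau_2(L)$-marginal of $f$ is $f^M_L$; and each second-type pair $(\tau_1(K_1),\tau_2(K_2))$ produced by $\cup^*$ satisfies $\tau_1(K_1)\cap\tau_2(K_2)=X$ by the definition of the gluing, so its conditional independence over $f_X$ is immediate from the general property that in a conditionally independent coupling the two ``sides'' are independent given the shared factor. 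The same induction step also carries the ``furthermore'' clause forward: for first-type edges $A$ one has $f_A=f^M_A\in\mathfrak{A}$, and for second-type edges $A=\tau_1(K_1)\cup\tau_2(K_2)$ the distribution $f_A$ is itself the conditionally independent coupling $C(f^M_{K_1},f^M_{K_2},\phi'_1,\phi'_2)$ of two elements of $\mathfrak{A}$ over their common $X$-marginal.

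Uniqueness is handled independently of the construction: by lemma \ref{reducible} every reflection complex is $2$-reducible, and conditions (2)-(3) are exactly the hypotheses of lemma \ref{umeasure}, which guarantees at most one distribution on each target graph $G$. The step I expect to be the main obstacle is the verification of (3) for second-type pairs: one must extract from the explicit formula (\ref{inex0}) that every pair added by $\cup^*$ — not merely the ``outer'' pair $(\tau_1(V(M)),\tau_2(L))$ — becomes a conditionally independent pair of the new $f$. Unwinding the identifications encoded in the $\tau_i$ into concrete marginal computations requires careful bookkeeping of the $b$-hypergraph structure, but the underlying probabilistic fact that the two sides of a conditionally independent coupling become independent after conditioning on the joint factor is what drives the argument.
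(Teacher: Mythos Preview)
Your proposal is correct and follows essentially the same approach as the paper: induction along the construction of the reflection complex, defining $f$ as the conditionally independent coupling of $f^M$ with $f^M_L$ over $X$, invoking lemma~\ref{frun} for the frame, checking conditions (2)--(3) and the ``furthermore'' clause separately for first-type and second-type edges/relations, and deducing uniqueness from lemmas~\ref{reducible} and~\ref{umeasure}. The only cosmetic difference is that you explicitly split the first-type relations into those coming from $B(M)$ and those coming from the $b$-structure on $L$, whereas the paper treats them together; your identification $\tau_1(K_1)\cap\tau_2(K_2)=\phi(X)$ and the probabilistic fact you flag as the ``main obstacle'' are exactly what the paper uses as well.
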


\begin{proof}  It is enough to show the existence of $f$ since the uniqueness follows from lemma \ref{reducible} and lemma \ref{umeasure}. Let $n$ be the number of vertices of $N$. We prove the statement by induction on $n$. The case $n=2$ is trivial. If $n>2$ we can assume by induction that $N=r_{L,X}(M)$ for some reflection complex $M=(V,E,B)$ that satisfies the theorem with coupling structure $f'$. For a fix graph $G$ we have that $f'(G)$ is a probability distribution on $V(G)^V$. We construct $f(G)$ as the conditionally independent coupling of $f'(G)$ with an identical copy of its marginal distribution on the coordinates in $L$ over the joint factor given by the marginal distribution on the coordinates in $X$. We have that the probability distribution $f'(G)$ is automatically defined on the vertex set $V(N)$ of $N$. 

We check the statements of the theorem for $f$. For the first types of edges and relations (see the definition of $r_{L,X}$) all the statements follows directly from the fact that $f'$ satisfies the statement. In particular the second condition remains valid for $f$.
 For edges and relations of the second type assume that $K_1,K_2\in E, K_2\subseteq L$ and $X\subseteq K_1,K_2$. Observe that since $f$ is the conditionally independent coupling of $f_{\tau_1(V)}$ and $f_{\tau_2(L)}$ over $f_{\tau(X)}$ we have that $f_{\tau_1(K_1)}$ and $f_{\tau_2(K_2)}$ are conditionally independent over $f_{\tau(X)}$ and thus $f_K$ is the conditionally independent coupling of two probability schemes in $\mathfrak{A}$ over a joint vertex factor. It follows that the marginal on $K$ is in $\mathfrak{A}$. Furthermore this verifies conditional independence for the pair $(\tau_1(K_1),\tau_2(K_2))\in B(N)$. The statement on the frame follows from lemma \ref{frun}.

\end{proof}

\begin{corollary} Frames of reflection complexes are coupling frames. In other words $\mathcal{F}(N)\in\mathfrak{S}$ holds for every reflection complex.
\end{corollary}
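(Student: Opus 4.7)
The corollary is essentially a direct packaging of the existence statement in Theorem~\ref{refcoup}, so the proof proposal is very short. The plan is to apply Theorem~\ref{refcoup} to an arbitrary reflection complex $N$ and then invoke the definitions of $\mathfrak{A}$ and $\mathfrak{S}$.

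More concretely, given a reflection complex $N$ with frame $H=\mathcal{F}(N)$, Theorem~\ref{refcoup} produces a coupling structure $f\in\mathfrak{A}$ whose frame is exactly $H$. By the definition of $\mathfrak{S}$ as the set of frames of coupling structures in $\mathfrak{A}$, this is precisely the statement $H=\mathcal{F}(N)\in\mathfrak{S}$. So the proof consists of a single line: apply the previous theorem and read off the frame.

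There is no genuine obstacle here; all the real work was done in Theorem~\ref{refcoup}, whose inductive construction (peeling off a reflection $N=r_{L,X}(M)$, getting $f'$ by induction, and forming the conditionally independent coupling of $f'$ with an identical copy of its marginal on $L$ over the joint marginal on $X$) already puts $f$ inside $\mathfrak{A}$ at every inductive step. The only thing worth emphasizing in a one-sentence proof is that the construction in Theorem~\ref{refcoup} never leaves $\mathfrak{A}$, because each step is a conditionally independent coupling over a joint vertex factor---this is exactly the closure property in the definition of $\mathfrak{A}$. Hence the corollary follows immediately.
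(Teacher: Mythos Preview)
Your proposal is correct and matches the paper's approach: the corollary is stated immediately after Theorem~\ref{refcoup} without any separate proof, since it follows directly from the existence of $f\in\mathfrak{A}$ with frame $\mathcal{F}(N)$ and the definition of $\mathfrak{S}$. Your one-line argument is exactly what the paper intends.
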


\medskip

We are ready to prove that thick graphs satisfy Sidorenko's conjecture.

\noindent{\it Proof of Theorem \ref{thicksid}.}~~If $H$ is a thick graph then $H=\mathcal{F}(M)$ for some thick reflection complex $M=(V,E,B)$. We have that $h_H(V)\in W_B+Q_V$ . Since all components of $s_H(V)-h_H(V)$ are positive we get that $s_H(V)\in W_B+Q_V$. According to Theorem \ref{refcoup} there is a coupling structure $f\in\mathfrak{A}$ with frame $H$ such that $(A_1,A_2)\in B$ implies that $(A_1,A_2)\in CI(f)$. Then theorem \ref{general} implies that $H$ satisfies Sidorenko's conjecture. 

\bigskip

\section{Hypergraphs}

Most of the results in this paper have a generalization to $k$-uniform hypergraphs. Coupling structures can be defined exactly in the same way as for graphs. Our general theorem \ref{general} works exactly the same way as for graphs. We need a few alterations in the results about reflection complexes. Here is the list of these alterations.
\begin{enumerate}
\item When we work with $k$-uniform hypergraphs, the trivial $k$-reflection complex is defined to be the single $k$-edge $\{1,2,\dots,k\}$ with the trivial binary relation. A $k$-reflection complex is built up from the trivial $k$-reflection  complex by operations of the form $r_{L,X}$. 
\item The frame $\mathcal{F}(M)$ of a $k$-reflection complex $M$ is the set of edges of size $k$. Note that in a $k$-reflection complex there are no smaller edges.
\item Lemma \ref{reducible} has to be changed to the statement that $k$-reflection complexes are $k$-reducible. The proof is basically identical.
\item We say that a $k$-reflection complex $M=(V,E,B)$ is weakly thick if $s_H(V)\in W_B+Q_V$ where $H=\mathcal{F}(M)$ is the frame of $M$. A $k$-uniform hypergraph is called weakly thick if it is the frame of a weakly thick $k$-reflection complex $M$. 
\item In lemma \ref{umeasure} $2$-reducible has to be replaced by $k$-reducible and in the third condition $|A|=2$ has to be replaced by $|A|=k$. 
\end{enumerate}

With these alterations we have the following theorem.

\begin{theorem} Every weakly thick $k$-uniform hypergraph satisfies Sidorenko's conjecture. 
\end{theorem}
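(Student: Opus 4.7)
The plan is to replicate the proof of Theorem \ref{thicksid} line by line, substituting each ingredient by the $k$-uniform counterpart announced in the bulleted list. The weak-thickness hypothesis on the hypergraph $H = \mathcal{F}(M)$ with $M = (V,E,B)$ directly gives $s_H(V) \in W_B + Q_V$, so we skip the step converting $h_H(V)$ to $s_H(V)$ used in the graph case. What remains is to produce a coupling structure $f$ whose conditionally independent pairs include every $(A_1,A_2) \in B$, and then apply the hypergraph version of Theorem \ref{general} (which, as the paper notes, transfers without change).

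First I would establish the $k$-uniform analogue of Theorem \ref{refcoup}: for every $k$-reflection complex $N$ there is a unique coupling structure $f \in \mathfrak{A}$ whose frame is $\mathcal{F}(N)$, whose $k$-edge marginals coincide with the uniform random $k$-edge of the target hypergraph $G$, and for which every pair in $B(N)$ is a conditionally independent pair of $f$. The proof is an induction on $|V(N)|$ that copies the graph version verbatim. Given $N = r_{L,X}(M)$, I would inductively obtain $f'$ for $M$ and define $f(G)$ as the conditionally independent coupling of $f'(G)$ with an independent copy of its $L$-marginal over the $X$-marginal. Uniqueness follows from the modified Lemma \ref{reducible} together with the $k$-uniform version of Lemma \ref{umeasure}. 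The frame claim uses the $k$-uniform Lemma \ref{frun}: second-type edges have size $|K_1|+|K_2|-|X| = 2k - |X|$, which exceeds $k$ whenever $X$ is a proper subset of $L$, so they do not enter $\mathcal{F}$.

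Once $f$ is in hand the finish is immediate. Since $B(M) \subseteq CI(f)$ we have $W_B \subseteq C_f$, hence $s_H(V) \in C_f + I_f + Q_V$, and the $k$-uniform version of Theorem \ref{general} applied with $A = V$ exhibits $f_V$ as a witness measure for $H$ in every target hypergraph $G$, proving Sidorenko's conjecture for $H$.

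The main obstacle, as in the graph case, is the conditional-independence verification for the second-type pairs $(\tau_1(K_1),\tau_2(K_2)) \in B(N)$ inside the inductive construction. One must argue that because $f$ is by construction the conditionally independent coupling of $f_{\tau_1(V)}$ and $f_{\tau_2(L)}$ over $f_{\tau(X)}$, the sub-pair marginals $f_{\tau_1(K_1)}$ and $f_{\tau_2(K_2)}$ inherit conditional independence over $f_{\tau(X)}$. The paper's graph-level argument is exactly this, and transferring it to $k$-uniform hypergraphs requires no new idea but careful bookkeeping of the extended edge and relation sets produced by the $\cup^*$ operation.
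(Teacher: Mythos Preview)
Your proposal is correct and follows exactly the route the paper intends: it is the proof of Theorem~\ref{thicksid} transported through the five listed alterations, with the $h_H\to s_H$ step omitted because weak thickness already hands you $s_H(V)\in W_B+Q_V$. One small slip to fix in your frame argument: second-type edges have size $|K_1|+|K_2|-|X|$ with $|K_1|,|K_2|\geq k$ (not necessarily $=k$), so the correct reason they never land in $\mathcal{F}$ is that $|\tau_1(K_1)\cup\tau_2(K_2)|\geq\max(|K_1|,|K_2|)\geq k$ with equality forcing $K_1=K_2=X$, in which case the edge is already of the first type.
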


In the rest of this chapter we construct a class of weakly thick $k$-uniform hypergraphs. Let $\mathcal{Z}_k$ denote the set of $k$-uniform hypergraphs consisting of isolated (non-intersecting edges) and isolated points. The key idea is that the analogy of lemma \ref{forcon} holds with $s_H(V)$ inside the family $\mathcal{Z}_k$. More perceisely, if $H=(V,E)$ is a $k$-uniform hypergraph in $\mathcal{Z}_k$ then $-s_H(V)\in Q_V$. The proof is similar to the proof of lemma \ref{forcon} but even simpler. We go by induction on $|V|$. The statement is trivial if $H$ is a single point or a single $k$-edge. In any other case $H$ is the disjoint union of two smaller hypergraphs in $\mathcal{Z}_k$ and then the same calculation finishes the proof as in lemma \ref{forcon}.

\begin{definition} Let $\mathcal{C}_k$ denote the set of $k$-reflection complexes that can be built up from the trivial $k$-reflection complex using operations of the form $M\mapsto r_{L,X}(M)$ where $X$ spans a hypergraph in $\mathcal{F}(M)$ that is in $\mathcal{Z}_k$. Let $\mathcal{R}_k$ denote the frames of $k$-reflection complexes in $\mathcal{C}_k$. 
\end{definition}

Using the same arguments as in lemma \ref{genforgl} and in theorem \ref{genforgl2} with graphs in $\mathcal{Z}_k$ instead of forests we obtain the following.

\begin{theorem}\label{hypsid} Hypergraphs in $\mathcal{R}_k$ are weakly thick and thus they satisfy Sidorenko's conjecture.
\end{theorem}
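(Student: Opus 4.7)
The plan is to replicate the graph-case argument of Lemma \ref{genforgl} and Theorem \ref{genforgl2} with three substitutions throughout: weak thickness in place of thickness, the set function $s_H$ in place of $h_H$, and the family $\mathcal{Z}_k$ in place of the family of forests. The paper has already assembled the two key ingredients, namely the $\mathcal{Z}_k$-analog of Lemma \ref{forcon} asserting $-s_H(V)\in Q_V$ whenever $H\in\mathcal{Z}_k$, and the theorem (stated just above in this section) that every weakly thick $k$-uniform hypergraph satisfies Sidorenko's conjecture. Hence the task reduces to verifying weak thickness for every $M\in\mathcal{C}_k$.

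First I would record the $s_H$-version of Lemma \ref{hincl}: for a $k$-uniform hypergraph $H=(V,E)$ and subsets $A_1,A_2\subseteq V$ such that no edge of $H$ meets both $A_1\setminus A_2$ and $A_2\setminus A_1$, one has
\[
s_H(A_1\cup A_2)=s_H(A_1)+s_H(A_2)-s_H(A_1\cap A_2)-t_{A_1,A_2}.
\]
This is a mechanical expansion using the identity $-1_{A_1}-1_{A_2}+1_{A_1\cap A_2}+1_{A_1\cup A_2}=t_{A_1,A_2}$ together with the observation that under the non-crossing hypothesis $E(A_1\cup A_2)=E(A_1)\cup E(A_2)$ and $E(A_1)\cap E(A_2)=E(A_1\cap A_2)$.

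Second I would run the analog of Theorem \ref{genforgl2} by induction on the length of the construction sequence of $M=(V,E,B)\in\mathcal{C}_k$, proving that every edge of $M$ spans a weakly thick sub-$b$-hypergraph. The base case (the trivial $k$-reflection complex) is immediate because there $s_H(V)=0$. For the inductive step, write $M=r_{L,X}(M')$ with $X$ spanning some hypergraph in $\mathcal{Z}_k$ inside $\mathcal{F}(M')$; edges of the first type are covered directly by induction. For an edge $K=\tau_1(K_1)\cup\tau_2(K_2)$ of the second type, set $A_i=\tau_i(K_i)$, so that $A_1\cap A_2=\tau_1(X)=\tau_2(X)$ and $A_1\cup A_2=K$. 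The pair $(A_1,A_2)$ lies in the binary relation of $M$ by construction of $\cup^*$, so $t_{A_1,A_2}\in W_B$; induction on edge size gives $s_H(A_1),s_H(A_2)\in W_B+Q_V$; and the $\mathcal{Z}_k$-hypothesis on $X$ combined with the $\mathcal{Z}_k$-version of Lemma \ref{forcon} gives $-s_H(A_1\cap A_2)\in Q_V$. Summing via the $s_H$-identity places $s_H(K)$ in $W_B+Q_V$, completing the induction. Taking $K=V$ gives weak thickness of $M$, and the previously stated theorem for weakly thick hypergraphs then delivers Sidorenko's conjecture.

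The one genuinely delicate point is verifying the non-crossing hypothesis needed for the $s_H$-inclusion-exclusion at each induction step: no $k$-edge of the frame $\mathcal{F}(M)$ should cross between $A_1\setminus A_2$ and $A_2\setminus A_1$. The size estimate $|\tau_1(K_1)\cup\tau_2(K_2)|=|K_1|+|K_2|-|X|$ shows that all second-type edges added by a $\cup^*$ have size strictly greater than $k$ unless $|X|=k$, in which case the identification forces $X=L=K_1=K_2$ and nothing new enters the frame. This is the sole place where $k$-uniformity plays an essential role; the remainder of the proof is the formal analog of the graph argument.
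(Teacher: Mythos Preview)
Your proposal is correct and follows exactly the route the paper indicates: it explicitly says to repeat the arguments of Lemma~\ref{genforgl} and Theorem~\ref{genforgl2} with $\mathcal{Z}_k$ in place of forests, and that is what you do, with $s_H$ replacing $h_H$ and weak thickness replacing thickness. Your handling of the non-crossing hypothesis for the $s_H$-inclusion-exclusion (showing that second-type edges of size exactly $k$ are degenerate, so all frame edges inside $K$ lie in $A_1$ or $A_2$) is a genuine detail the paper leaves implicit; it is needed, and your size argument for it is correct.
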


Note that $\mathcal{R}_2$ is a smaller family than $\mathcal{R}$ but it contains trees, even cycles, bipartite graphs in which one point is complete to the other side and even tree-arrangeable graphs. For this reason theorem \ref{hypsid} can be considered as a hypergraph generalization of the famous Blakley-Roy inequality and the result by Conlon-Fox and Sudakov. 
The family $\mathcal{R}_k$ contains hypergraph analogues of forests that will be described in the examples chapter.

\section{Examples}

The class of thick graphs is a large class of graphs satisfying Sidorenko's conjecture. In this chapter we show how previous results on Sidorenko's conjecture fit into this framework. Quite interestingly, even the subclass $\mathcal{R}$ of thick graphs covers most of the previous cases. At the end of the chapter we show a few examples for the hypergraph case including a generalization of the Blakley-Roy inequality.

\bigskip

\noindent{\bf Trees}~~Let us build a reflection complex $M=(V,E,B)$ by iterating operations of the form $r_{\{a,b\},\{b\}}$ starting from the trivial reflection complex. By induction we have that $M\in\mathcal{C}$ and so $M$ is thick. The frame of $M$ is a tree $T$ and $E$ consists of the subsets in $V$ that span a connected sub-graph (a tree) in $T$. It is clear by induction that every tree can be obtained in this way and so every tree is a thick graph in $\mathcal{R}$.
\bigskip

\noindent{\bf Reflection trees}~~Let $T$ be a tree and $M$ be a reflection complex in $\mathcal{C}$ with frame $T$ as in the previous exampla.  Consider a sequence of edges $E_1,E_2,\dots,E_k$ in $M$ and subsets $X_1,X_2,\dots,X_k$ in $V$ with $X_i\subseteq E_i$ for $1\leq i\leq k$. Let us apply all the operations $r_{E_i,X_i}$ to $M$ in an arbitrary order to obtain a new reflection complex $M'$. We have that $M'\in\mathcal{C}$. The frame $\mathcal{F}(M')$ of the resulting reflection complex $M'$ is a so-called reflection tree. Reflection trees are all in the class $\mathcal{R}$ and so they are thick graphs. Reflection trees were introduced in \cite{LS} and Sidorenko's conjecture was verified for them. Note that reflection trees include all bipartite graphs in which one point is complete to the other side. Sidorenko's conjecture was first verified for such graphs by Conlon, Fox and Sudakov in \cite{CFS}. Reflection trees also include all even cycles.

\bigskip

\noindent{\bf Tree-arrangeable graphs}~~Let $\mathcal{T}$ be the class of bipartite graphs that can be built up from the single edge using the following two operations 1.) We add a new vertex to the second partition class connected to a vertex in the first partition class, 2.) we add a vertex to the first color class and connect it to a subset of the neighbors of another vertex in the first color class. We show by induction that $\mathcal{T}$ is contained in $\mathcal{R}$. The idea is that in the same way as we build up $H\in\mathcal{T}$ we can build up a reflection complex in $\mathcal{C}$. The first operation is represented by $r_{\{v,w\},\{v\}}$ where $v$ is in the first partition class and $\{v,w\}$ is in the frame of the reflection complex. The second operation is represented by $r_{\{v\}\cup S,S}$ where $v$ is in the first partition class and $S$ is a subset of its neighbors in the frame of the reflection complex. It remains to show that the sets of the form $\{v\}\cup S$ are always hyper-edges in the reflection complex that  we build up. This is clear by induction since both operations preserve this property.  Graph in $\mathcal{T}$ are called tree-arrangeable. It was proved in \cite{KLL} that tree-arrangeable graphs satisfy Sidorenko's conjecture. We obtained that they are all thick graphs.

\bigskip

\noindent{\bf Hypercubes and grids}~~The $n$-dimensional hypercube is the graph on the vertex set $\{0,1\}^n$ in which two $0-1$ sequences are connected if their Hamming distance is $1$. It was proved by Hatami \cite{Hat} that hypercubes satisfy Sidorenko's conjecture in every dimension. Since hypercubes are all of the form $e\square e\square\dots\square e$ we have by corollary \ref{product} that hypercubes are all thick graphs. An interesting fact is that hypercubes up to dimension $5$ are in the class $\mathcal{R}$. We conjecture that in high enough dimension they are not in $\mathcal{R}$ and thus the class of thick graphs is bigger then $\mathcal{R}$. 

A grid (in dimesion $n$) is a graph of the porm $P_1\square P_2\square\dots\square P_n$ where each $P_i$ is a path. It follows from the results in \cite{KLL} that all the grids satisfy Sidorenko's conjecture. It follows from corollary \ref{product} that grids are all thick graphs. 

\bigskip

\noindent{\bf Bipartate graphs with at most $4$ vertices on one side}~~It was proved by Sidorenko that bipartite graphs with at most $4$ points on one side satisfy Sidorenko's conjecture. Unfortunately the proof can't be find in the paper \cite{Sid} and the paper that supposed to contain the proof is only available in Russian. It turns out that all of these graphs are contained in the family $\mathcal{R}$ and thus they are all thick. The proof is a tedious case analysis and each case relies on a tricky way of building up a reflection complex. Here we only prove the statement for the case when one side has at most $3$ points. Let $H=(V,E)$ be a bipartite graph with bipartition $V=A\cup B$. We can represent $H$ by a function $m_H:2^A\rightarrow\mathbb{N}$ in a way that for $s\subseteq A$ the value of $m_H(S)$ is the number of points in $B$ whose set of neighbors is $S$. Graphs represented by the same function are isomorphic to each other. The family $\mathcal{R}$ has the property that we can glue single edges to each point an thus it is enough to treat those cases when $m_H(\{a\})=0$ holds for every $a\in A$. Furthermore if $m_H(A)\neq 0$ then one point of $B$ is complete to the other side and thus $H\in\mathcal{R}$. These observations cover the case of $|A|\leq 2$. If $|A|=3$ then we are left with the case when each point of $B$ is connected to exactly two points in $A$. Assume that $A=\{1,2,3\}$. Then $H$ is represented by the vector $v=(m_H(\{2,3\},m_H(\{1,3\}),m_H(\{1,2\}))$. If there is a zero coordinate in $v$ then $H$ is a reflection tree and so we are done. In the remaining case we do the following. Let us consider $C_6$ on the vertex set $W=\{1,2,3,z_1,z_2,z_3\}$ such that $(i,z_j)$ is an edge if and only if $i\neq j$. The graph $C_6$ is in $\mathcal{R}$ and so there is a reflection complex in $\mathcal{C}$ with this frame. Then we apply $r_{W,W\setminus\{z_j\}}$ to $M$ $v_j-1$ times for $j=1,2,3$. We obtain $H$ as the frame of the resulting reflection complex that is in $\mathcal{C}$.

\bigskip

\noindent{\bf Hypergraph forests and other hypergraph examples} 

\bigskip

Let us define the analogue of a forest in the $k$-uniform hypergraph setting in the following way. A single $k$-edge is a $k$-forest. If $H=(V,E)$ is a $k$-forest and $L\in E$ is an edge with a subset $X\subseteq L$ then the hypergraph $H \cup_{\phi_1,\phi_2} L$ is also a $k$-forest where $\phi_1$ and $\phi_2$ are the embeddings of $X$ into $V$ and $L$. (recall that the operation $\cup_{\phi_1,\phi_2}$ is meaningful for hypergraphs without a binary operation) Note that this definition does not allow independent points and so the $k=2$ case does not completely match the usual notion of a forest. 
It follows from theorem \ref{hypsid} that $k$-forests satisfy Sidorenko's conjecture. 
In particular we obtain the following generalization of the Blakely-Roy inequality. Let $f(x_1,x_2,\dots,x_k)$ by a symmetric non-negative function on $[0,1]^k$. Then for every $n\geq k$ we have that
\begin{equation}\label{hyproy}
\int_{[0,1]^n}\prod_{i=1}^{n-k+1}f(x_i,x_{i+1},\dots,x_{i+k-1}) ~d\mu^n\geq\Bigl(\int_{[0,1]^k}f(x_1,x_2,\dots,x_k)~d\mu^k\Bigr)^{n-k+1}.
\end{equation}
Note that the class $\mathcal{R}_k$ contains many more hypergraphs that satisfy Sidorenko's conjecture. A nice $3$-uniform example in $\mathcal{R}_3$ is the face-hypergraph of the octahedron. A generalization of this example is the complete $k$-uniform, $k$-partite hypergraph $K_{a_1,a_2,\dots,a_k}$ that is also in $\mathcal{R}_k$.
The class of weakly thick $k$-uniform hypergraphs is probably much bigger that $\mathcal{R}_k$ and it seems to be an interesting research topic to give a completely combinatorial description of it. 

\bigskip

\noindent{\bf Acknowledgement:}~The author is grateful to M\'ari\'o Szegedy for helpful comments. This research was supported by the MTA R\'enyi Institute Lendulet Limits of Discrete Structures group.

\end{document}